\newtheorem{theorem}{Theorem}[section]
\newtheorem{proposition}[theorem]{Proposition}
\newtheorem{lemma}[theorem]{Lemma}
\newtheorem{example}[theorem]{Example}
\newtheorem{corollary}[theorem]{Corollary}
\newtheorem{remark}[theorem]{Remark}
\newtheorem{definition}[theorem]{Definition}
\DeclareMathOperator{\sto}{\xrightarrow[]{st-o}}
\DeclareMathOperator{\pc}{\xrightarrow[]{p}}
\DeclareMathOperator{\std}{\downarrow^{st}}
\DeclareMathOperator{\oc}{\xrightarrow[]{o}}
\DeclareMathOperator{\stpd}{\downarrow^{st_p}}
\DeclareMathOperator{\stpc}{\xrightarrow[]{st_p}}
\DeclareMathOperator{\stc}{\xrightarrow[]{\mu-st_\tau}}
\begin{document}

\title{Statistical $p$-convergence in lattice-normed Riesz spaces}
\maketitle
\author{\centering {Abdullah Ayd\i n$^{1,*}$, Reha Yapal\i$^{2}$, Erdal Korkmaz$^{3}$}\\ 
\small Department of Mathematics, Mu\c{s} Alparslan University, Mu\c{s}, 49250, Turkey, \\
$^{1}$a.aydin@alparslan.edu.tr, $^{2}$reha.yapali@alparslan.edu.tr, $^{3}$e.korkmaz@alparslan.edu.tr\\ \small $^*$Corresponding Author}
\begin{abstract}
A sequence $(x_n)$ in a lattice-normed space $(X,p,E)$ is statistical $p$-convergent to $x\in X$ if there exists a statistical $p$-decreasing sequence $q\stpd 0$ with an index set $K$ such that $\delta(K)=1$ and $p(x_{n_k}-x)\leq q_{n_k}$ for every $n_k\in K$. This convergence has been investigated recently for $(X,p,E)=(E,|\cdot|,E)$ under the name of statistical order convergence and under the name of statistical multiplicative order convergence, and also, for taking $E$ as a locally solid Riesz space under the names statistically unbounded $\tau$-convergence and statistically multiplicative convergence. In this paper, we study the general properties of statistical $p$-convergence.
\end{abstract} 

{\bf{Keywords:} \rm Riesz space, lattice-normed Riesz spaces, statistical $p$-convergence, statistical $p$-decreasing.}

{\bf MSC2020:} {\normalsize 46A40, 46E30,40A05,46B42}
\maketitle

\section{Introduction}
Lattice-valued norms on Riesz spaces and statistical convergence of sequences provide natural and efficient tools in the theory of functional analysis. The main aim of the present paper is to combine these two subjects, and so, is to introduce the concept of {\em statistical $p$-convergence} on lattice-normed spaces and illustrate the usefulness of lattice-valued norms for the investigation of different types of statistical convergence in Riesz spaces, which attracted the attention of several authors in a series of recent papers \cite{Aydn1,Aydn2,AE,SP,TA}. 

Riesz space that was introduced by F. Riesz in \cite{Riez} is an ordered vector space having many applications in measure theory, operator theory, and applications in economics (cf. \cite{AB,ABPO,AlTo,LZ,Za}). On the other hand, lattice-normed space is a lattice valued norm, and also, it should be noticed that the theory of lattice-normed spaces is well developed in the case of decomposable lattice norms; see \cite{K,KK}. But, unless otherwise stated, we do not assume lattice norms to be decomposable. One study related to this paper is done by Ayd\i n et al., in \cite{AEEM1}, where unbounded order convergence is defined on lattice-normed vector lattice.

As an active area of research, statistical convergence is a generalization of the ordinary convergence of a real sequence, and the idea of statistical convergence was firstly introduced by Zygmund \cite{Zygmund}. After then, Fast \cite{Fast} and Steinhaus \cite{St} independently improved that idea. Several applications and generalizations of the statistical convergence of sequences have been investigated by several authors (cf. \cite{Con,Fast,Fridy,St,TA,Trip}). 

The concept of statistical convergence in Riesz spaces is introduced by Ercan \cite{Ercan}, where the notion of the statistically $u$-uniformly convergent sequence is introduced in Riesz spaces. Then \c{S}en\c{c}imen and Pehlivan extended the statistical convergence to Riesz space with respect to order convergence; see \cite{SP}. Recently, Ayd\i n et al., have investigated some studies about statistical convergence on Riesz spaces and locally solid Riesz spaces; see \cite{Aydn1,Aydn2,AE,TA}.

The structure of the paper is as follows. In Section 2, we give several notions related to lattice-normed spaces and to statistical convergence. In Section 3, we introduce the notions of statistical $p$-decreasing and statistical $p$-convergence sequence in lattice-normed spaces. In the last section, we study on main results.

\section{Preliminaries}
We begin the section with some basic concepts related to the theory of Riesz space and refer to \cite{AB,ABPO,K,KK,LZ,Za} for more details.
\begin{definition}
	A real-valued vector space $E$ with a partial order relation "$\leq$" on $E$ (i.e. it is an antisymmetric, reflexive and transitive relation) is called {\em ordered vector space} whenever, for every $x,y\in E$, we have
	\begin{enumerate}
		\item[(a)] $x\leq y$ implies $x+z\leq y+z$ for all $z\in E$,
		\item[(b)] $x\leq y$ implies $\lambda x\leq \lambda y$ for every $0\leq \lambda \in \mathbb{R}$.
	\end{enumerate}	
\end{definition}

An ordered vector space $E$ is called {\em Riesz space} or {\em vector lattice} if, for any two vectors $x,y\in E$, the infimum and the supremum
$$
x\wedge y=\inf\{x,y\} \ \ \text{and} \ \ x\vee y=\sup\{x,y\}
$$
exist in $E$, respectively. A Riesz space is called {\em Dedekind complete} if every nonempty bounded above subset has a supremum (or, equivalently, whenever every nonempty bounded below subset has an infimum). For an element $x$ in a Riesz spaces $E$, {\em the positive part}, {\em the negative part}, and {\em module} of $x$, respectively 
$$
x^+:=x\vee0, \ \ \ x^-:=(-x)\vee0\ \ and \ \ |x|:=x\vee (-x).
$$ 
In the present paper, the vertical bar $|\cdot|$ of elements of the Riesz spaces will stand for the module of the given elements. A Riesz space $E$ is called {\em Archimedean} whenever $\frac{1}{n}x\downarrow0$ holds in $E$ for each $x\in E_+$. Every Riesz space need not be Archimedean. To see this, we give the following example.
\begin{example}\label{archimedean}
	Consider the Riesz space $E:=\mathbb{R}^2$ with lexicographical ordering: $(x_1,x_2)\leq(y_1,y_2)$ if and only if $x_1<y_1$ or $x_1=y_1$ and $x_2\leq y_2$. Then $E$ is not Archimedean because, for the positive element $(1,1)\in E$, we have $\frac{1}{n}(1,1)\downarrow$, but $\frac{1}{n}(1,1)\neq0$.
\end{example}

Unless otherwise stated, we assume that all vector lattices are real and Archimedean. A sequence $(x_n)$ in a Riesz space $E$ is said to be {\em increasing} whenever $x_1 \leq x_2\leq\cdots$, and {\em decreasing} if $x_1 \geq x_2\geq \cdots$. Then we denote them by $x_n\uparrow$ and $x_n\downarrow$ respectively. Moreover, if $x_n\uparrow$ and $\sup x_n=x$ then we write $x_n\uparrow x$. Similarly, if $x_n\downarrow$ and $\inf x_n=x$ then we write $x_n\downarrow x$. Then we call that $(x_n)$ is increasing or decreasing as monotonic.

One of the fundamental and crucial concepts in the study of Riesz space is the order convergence (cf. \cite[Thm.16.2]{LZ}). 
\begin{definition}
	A sequence $(x_n)$ in a Riesz space $E$ is called {\em order convergent} to $x\in E$ $($$x_n\oc x$, for short$)$ whenever there exists a sequence $(q_n)\downarrow0$ in $E$ such that $|x_n-x|\le q_n$ for all $n\in \mathbb{N}$.
\end{definition}
We refer the reader for an exposition on order convergence to \cite{AS}.

\begin{definition}
	Let $X$ be a vector space and $E$ be a Riesz space. Then $p:X \to E_+$ is called an {\em $E$-valued vector norm} whenever it satisfies the following conditions:
	\begin{enumerate}
		\item $p(x)=0\Leftrightarrow x=0$;
		\item $p(\lambda x)=|\lambda|p(x)$ for all $\lambda\in\mathbb{R}$;
		\item $p(x+y)\leq p(x)+p(y)$ for all $x,y\in X$.
	\end{enumerate}
\end{definition}
Then the triple $(X,p,E)$ is called a {\em lattice-normed space}, abbreviated as $LNS$. 
If, in addition, $X$ is a Riesz spaces and the vector norm $p$ is monotone (i.e., $|x|\leq |y|\Rightarrow p(x)\leq p(y)$ holds for all $x,y\in X$) then the triple $(X,p,E)$ is called {\em lattice-normed vector lattice} or {\em lattice-normed Riesz space}. We abbreviate it as $LNRS$.
The lattice norm $p$ in $LNS$s $(X,p,E)$ is said to be {\em decomposable} if, for all $x\in X$ and
$e_1,e_2\in E_+$, from $p(x)=e_1+e_2$ it follows that there exist $x_1,x_2\in X$ such that $x=x_1+x_2$ and $p(x_k)=e_k$ for $k=1,2$. While dealing with $LNRS$s, we shall keep in mind also the following examples.
\begin{example}\label{ExLNVL_2}
	Let $X$ be a normed space with a norm $\|\cdot\|$. Then $(X,\|\cdot\|,{\mathbb R})$ is an $LNS$ . 
\end{example}

\begin{example}\label{ExLNVL_3}
	Let $X$ be a Riesz space. Then $(X,|\cdot|,X)$ is an $LNRS$ . 
\end{example}

\begin{definition}
	Let $(X,p,E)$ be an $LNS$. Then a sequence $(x_n)$ in $X$ is called {\em $p$-convergent} to $x\in X$ $($or, shortly, $x_n\pc x)$ whenever $p(x_n)\oc p(x)$ holds in $E$.
\end{definition}
An $LNRS$ is called {\em $op$-continuous} if $x_n\oc 0$ implies that $p(x_n)\oc0$; see \cite{AEEM1}. We refer the reader for more information on $LNS$s to \cite{E,KK,K}. 

Now, we recall some basic properties of the concepts related to statistical convergence. Consider a set $K$ of positive integers. Then the {\em natural density} of $K$ is defined by
$$
\delta(K):=\lim_{n\rightarrow \infty}\frac{1}{n}\left \vert \left \{  k\leq n:k\in
K\right \}  \right \vert,
$$
where the vertical bar of sets will stand for the cardinality of the given sets. We refer the reader to an exposition on the natural density of sets to \cite{Fast,Fridy}. In the same way, a sequence $x=(x_{k})$ is called \textit{statistical convergent} to $L$ provided that
$$
\lim_{n\rightarrow \infty}\frac{1}{n}\left \vert \left \{  k\leq n:\left \vert
x_{k}-L\right \vert \geq \varepsilon \right \}  \right \vert =0
$$
for each $\varepsilon>0$. Then it is written by $S-\lim x_{k}=L$. We take the following notions from \cite{SP}.
\begin{definition}
	Let $(x_n)$ be a sequence in a Riesz space $E$. Then $(x_n)$ is called
	\begin{enumerate}
		\item[(a)] {\em statistical order decreasing} to $0$ if there exists a set $K=\{n_1<n_2<\cdots\}\subseteq\mathbb{N}$ with $\delta(K)=1$ such that $(x_{n_k})$ is decreasing and $\inf\limits_{n_k\in K}(x_{n_k})=0$, and so, it is abbreviated as $x_n\std 0$,
		\item[(b)] {\em statistically order convergent} to $x\in E$ if there exists a sequence $q_n\std0$ with a set $K=\{n_1<n_2<\cdots\}\subseteq\mathbb{N}$ such that $\delta(K)=1$ and $|x_{n_k}-x|\leq q_{n_k}$ for every $n_k\in K$, and so, we write $x_n\sto x$.
	\end{enumerate}
\end{definition}

\section{Statistical $p$-convergence}
In this section, we introduce the statistical convergence on $LNS$s. Some notions and results of this section are direct analogies of well-known facts of the theory of normed lattices. The notion of statistical monotony was first introduced in \cite{Trip} for real sequences. We extend the notions of statistical monotony and statistical convergence to statistical $p$-version of them in $LNS$s.
\begin{definition}
	Let $(X,p,E)$ be an $LNS$ and $(x_n)$ be a sequence in $X$. Then $(x_n)$ is said to be
	\begin{enumerate}
		\item[(1)] {\em statistical $p$-decreasing} to $0$ if there exists a set $K=\{n_1<n_2<\cdots\}\subseteq\mathbb{N}$ such that $\delta(K)=1$ and $p(x_{n_k})\downarrow 0$ on $K$, and so, we abbreviate it as $x_n\stpd0$,
		\item[(2)] {\em statistical $p$-convergence} to $x$ if there exists a sequence $q_n\stpd0$ in $X$ with a set $K=\{n_1<n_2<\cdots\}\subseteq\mathbb{N}$ such that $\delta(K)=1$ and $p(x_{n_k}-x)\leq q_{n_k}$ for all $n_k\in K$, and so, we write $x_n\stpc x$. 
	\end{enumerate}
\end{definition}

Note that every $p$-convergent sequence is statistical $p$-convergent to its $p$-limit. However, the converse need not be true in general. To illustrate this, we give the following example.
\begin{example}
	Consider the Riesz space $E:=c$ of all convergent real sequences. Then define a norm on $E$ by $p(x):=|x|+(\lim_{n\to\infty}|x_n|)\cdot \mathbb{1}$
	for every $x\in E$, where $\mathbb{1}$ denotes the sequence identically equal to $1$. Thus, $(E,p,E)$ is an $LNRS$. Now, take a sequence $(x_n)$ in $E$ denoted by $x_n:=(x_k^n)=(x_1^n,x_2^n,\cdots)\in E$ such that
	$$
	x_k^n:=
	\begin{cases} 
		m, & k=m^3\ and \ k=n  
		\\ 0, & k=m^3  \ and \ k\neq n
		\\ \frac{1}{k}, & k\neq m^3 \ and \ k=n 
		\\ 0, & k\neq m^3  \ and \ k\neq n 
	\end{cases}
	$$
	for $n,k,m\in \mathbb{N}$. Next, take another sequence $q_n:=(q_k^n)$ in $E$ denoted by $q_k^n:=m$ whenever $k=m^3$ and $k=n$, and otherwise, denoted by $q_k^n:=\frac{1}{k}$. Then it is clear that $q_{n_k}\downarrow 0$ on the index set $J:=\{n_j\in \mathbb{N}:\forall m\in\mathbb{N}, \ n_j\neq m^3\}$. Now, take $x_{k_j}$ as $y_k^n$ whenever $k\neq m^3$ for all $m\in\mathbb{N}$. Also, we have 
	$$
	p(x_{n_j})=p(y_k^n)=|y_k^n|+(\lim_{n\to\infty}|y_k^n|)\cdot \mathbb{1}=y_k^n\leq q_{n_j}
	$$
	for all $n_j\in J$. Therefore, we get $x_n\stpc 0$ in $(E,p,E)$. On the other hand, $(x_n)$ is not $p$-bounded, and so, it is not $p$-convergent sequence.
\end{example}

\begin{remark}
	Consider an $LNRS$ $(E,|\cdot|,E)$ for an arbitrary Riesz space $E$. Then the following statements hold.
	\begin{enumerate}
		\item[(i)] Statistical order decreasing and statistical $p$-decreasing of sequences coincide.
		\item[(ii)] Statistically order convergence and statistical $p$-convergence agree.
	\end{enumerate}
\end{remark}

In the following examples, we give some relations between the classical statistical convergences and statistical $p$-convergences.
\begin{example}
	Take the $LNRS$ $(\mathbb{R},|\cdot|,\mathbb{R})$. Then the statistical $p$-convergence of sequence implies the statistical convergence in $\mathbb{R}$ because order convergence implies the convergence in $\mathbb{R}$. 
\end{example}

\begin{example}
	Let $(X,\lVert\cdot\rVert)$ be a normed space. Then statistical $p$-convergence of sequence implies statistical norm convergence in $LNS$ $(X,\lVert\cdot\rVert,\mathbb{R})$.
\end{example}

\begin{remark}\label{Remarks} \
	\begin{enumerate}	
		\item[(i)] Every statistical $p$-decreasing to $0$ sequence is statistical $p$-convergent to $0$ in $LNS$s.
		
		\item[(ii)] A decreasing to $0$ sequence is a statistical $p$-decreasing to $0$ in $op$-continuous $LNRS$s. However, it need not be true in general $LNRS$s.
		
		\item[(iii)] Every $p$-decreasing sequence (i.e., sequences such that $p(x_n)\downarrow0$) is statistical $p$-decreasing to $0$ in $LNS$s.
		
		\item[(iv)] Any order convergent sequence in $op$-continuous $LNRS$s $st_p$-converges to its order limit.
		
		\item[(v)] The statistical $p$-decreasing limit is linear.
		
		\item[(vi)] If $0\leq r_n\leq q_n$ holds for all $n$ and $q_n\stpd0$ then $r_n\stpd0$ in $LNRS$s.
	\end{enumerate}
\end{remark}

The convergences of the properties of Example \ref{monotone not conversely} need not be true in general. To see one case of them, we give the following example.
\begin{example}\label{monotone not conversely}
	Let $E$ be the Riesz space $c_0$ of all convergent to zero real sequences. Consider a sequence $(x_n)$ in $LNRS$ $(E,|\cdot|,E)$ denoted by $x_n:=(x_k^n)=(x_1^n,x_2^n,\cdots,x_k^n,\cdots)\in E$ such that
	$$
	x_k^n:=
	\begin{cases} 
		1, & k=n^3 \\
		\frac{1}{k}, & k\neq n^3
	\end{cases}
	$$
	for all $n,k\in\mathbb{N}$. Then it is clear that $x_n\stpd 0$. Observe that the whole sequence $(x_n)$ is not monotonic, but it is statistically $p$-monotonic.
\end{example}

\begin{proposition}\label{uniqness}
	The statistical $p$-limit of a sequence is uniquely determined.
\end{proposition}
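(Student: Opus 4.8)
The plan is to derive $p(x-y)=0$ from the two hypotheses, which forces $x=y$ by the first axiom of the vector norm. So suppose $x_n\stpc x$ and $x_n\stpc y$. First I would unpack the definitions: there are sequences $(q_n),(r_n)$ in $X$ with $q_n\stpd 0$ and $r_n\stpd 0$, together with index sets $K$ and $L$ of natural density one, such that $p(x_n-x)\leq p(q_n)$ for $n\in K$ and $p(x_n-y)\leq p(r_n)$ for $n\in L$. Unwinding $\stpd$ once more, there are further index sets $K',L'$ with $\delta(K')=\delta(L')=1$ on which $\big(p(q_n)\big)$, respectively $\big(p(r_n)\big)$, is decreasing with infimum $0$.

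The first technical step is to pass to a single index set $M:=K\cap K'\cap L\cap L'$ with $\delta(M)=1$. This uses only the elementary fact that a finite union of density-zero sets has density zero (immediate from subadditivity of $n\mapsto|\{k\leq n:k\in A\}|$), applied to the complement of $M$; in particular $M$ is infinite, hence cofinal in both $K'$ and $L'$. Consequently the subsequences $\big(p(q_n)\big)_{n\in M}$ and $\big(p(r_n)\big)_{n\in M}$ are again decreasing, and they still have infimum $0$, since a cofinal subsequence of a decreasing sequence has the same infimum (and so does any of its tails).

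Next comes the order-theoretic endgame. For every $n\in M$ the triangle inequality for $p$ gives $p(x-y)\leq p(x_n-x)+p(x_n-y)\leq p(q_n)+p(r_n)$. Put $z:=p(x-y)\in E_+$ and fix $m\in M$. For $n\in M$ with $n\geq m$ we have $p(r_n)\leq p(r_m)$, hence $z-p(r_m)\leq p(q_n)$; thus $z-p(r_m)$ is a lower bound of $\{p(q_n):n\in M,\ n\geq m\}$, whose infimum is $0$, so $z-p(r_m)\leq 0$, i.e. $z\leq p(r_m)$. Letting $m$ range over $M$ yields $z\leq\inf_{m\in M}p(r_m)=0$. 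Combined with $z\geq 0$ this gives $z=0$, that is $p(x-y)=0$, and therefore $x=y$.

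I expect the only genuinely delicate points to be bookkeeping: keeping the index set that witnesses $\stpc$ separate from the one(s) that witness $\stpd$, and checking that restriction to the common density-one set $M$ preserves both the monotonicity and the value of the infimum. Everything else is routine; note in particular that, since $E$ is merely a Riesz space and not normed, the final argument must be phrased order-theoretically (as above) rather than with an $\varepsilon$-estimate.
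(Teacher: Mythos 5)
Your proof is correct and follows essentially the same route as the paper: intersect the density-one index sets, bound $p(x-y)$ by $q_n+r_n$ (in your notation $p(q_n)+p(r_n)$) via the triangle inequality, and conclude $p(x-y)=0$ from the decreasing-to-zero property. You are in fact more careful than the paper, which silently reuses one index set for both the convergence and the decreasing witnesses and asserts $(q_{n_m}+r_{n_m})\downarrow 0$ on $M$ without the cofinality and $\inf(a_n+b_n)=0$ arguments you supply.
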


\begin{proof}
	Assume that a sequence $(x_n)$ in an $LNS$ $(X,p,E)$ satisfies $x_n\stpc x$ and $x_n\stpc y$ for some different element $x,y\in X$. Then it follows that there are two sequences $q_n\stpd0$ and $r_n\stpd0$ with index sets $K,L\subseteq\mathbb{N}$ such that $\delta(K)=\delta(L)=1$ and 
	$$
	p(x_{n_k}-x)\leq q_{n_k} \ \ \text{and} \ \ p(x_{n_l}-y)\leq r_{n_l}
	$$
	holds for all $n_k\in K$ and $n_l\in L$. Now, consider the set $M:=K\cap L$. Then we have $\delta(M)=1$ and 
	$$
	p(x_{n_m}-x)\leq q_{n_m} \ \ \text{and} \ \ p(x_{n_m}-y)\leq r_{n_m}
	$$
	holds for all $n_m\in M$. Hence, by the inequality
	$$
	0\leq p(x-y)\leq p(x_{n_m}-x) + p(x_{n_m}-y) \leq q_{n_m}+r_{n_m},
	$$
	and by $(q_{n_m}+r_{n_m})\downarrow 0$ on $M$, we obtain the desired result, i.e., $x=y$.
\end{proof}

\begin{proposition}
	The statistical $p$-limit is linear.
\end{proposition}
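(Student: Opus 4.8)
The plan is to prove the two defining properties separately: first additivity, namely that $x_n \stpc x$ and $y_n \stpc y$ imply $x_n+y_n \stpc x+y$, and then homogeneity, namely that $x_n \stpc x$ implies $\lambda x_n \stpc \lambda x$ for every $\lambda\in\mathbb{R}$. Combining the two then gives $\lambda x_n+\mu y_n \stpc \lambda x+\mu y$ for all scalars $\lambda,\mu$, which is the full linearity statement.

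For additivity I would first unpack the hypotheses exactly as in the proof of Proposition \ref{uniqness}: there exist $q_n\stpd 0$ and $r_n\stpd 0$ together with index sets $K,L\subseteq\mathbb{N}$ of density $1$ such that $p(x_{n_k}-x)\le q_{n_k}$ on $K$ and $p(y_{n_l}-y)\le r_{n_l}$ on $L$. Put $M:=K\cap L$; then $\delta(M)=1$ since $\delta(\mathbb{N}\setminus M)\le\delta(\mathbb{N}\setminus K)+\delta(\mathbb{N}\setminus L)=0$, so both domination inequalities hold along $M$ simultaneously. Setting $s_n:=q_n+r_n$ and using the triangle inequality for the vector norm $p$,
$$
p\big((x_{n_m}+y_{n_m})-(x+y)\big)\le p(x_{n_m}-x)+p(y_{n_m}-y)\le q_{n_m}+r_{n_m}=s_{n_m}
$$
for every $n_m\in M$. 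It remains only to note that $s_n=q_n+r_n\stpd 0$, which is Remark \ref{Remarks}(v); if one prefers an explicit argument, intersect the density-$1$ index sets on which $p(q_{n_k})$ and $p(r_{n_k})$ decrease to $0$ and use that the sum of two decreasing-to-$0$ sequences in $E$ is again decreasing to $0$, together with $p(q_n+r_n)\le p(q_n)+p(r_n)$ and Remark \ref{Remarks}(vi). The displayed estimate then yields $x_n+y_n\stpc x+y$.

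For homogeneity, the case $\lambda=0$ is immediate (dominate by the zero sequence, which is $\stpd 0$). If $\lambda\neq 0$, use property (2) of the vector norm: $p(\lambda x_{n_k}-\lambda x)=|\lambda|\,p(x_{n_k}-x)\le|\lambda|\,q_{n_k}$ on $K$, while $|\lambda|\,q_n\stpd 0$ because $p(|\lambda|q_{n_k})=|\lambda|\,p(q_{n_k})$ and scaling a decreasing-to-$0$ sequence by a positive constant preserves monotonicity and the infimum; hence $\lambda x_n\stpc\lambda x$. Applying additivity to $\lambda x_n\stpc\lambda x$ and $\mu y_n\stpc\mu y$ completes the proof. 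The one point that needs care — and the only real obstacle — is the bookkeeping of index sets: one must be sure that the set where the dominating sequence is $p$-decreasing, the set where the domination inequality holds, and the analogous sets for the second sequence can all be intersected while keeping density $1$; granting that finitely many density-$1$ sets meet in a density-$1$ set and that $\stpd$-limits are linear (Remark \ref{Remarks}(v)), everything else is a direct application of the triangle inequality and the absolute homogeneity of $p$.
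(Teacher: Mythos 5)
Your additivity argument is essentially the paper's own proof of this proposition: the paper phrases the bookkeeping via density-zero ``bad'' sets and a union bound, $\{n:p(x_n+y_n-x-y)\nleq q_n+r_n\}\subseteq\{n:p(x_n-x)\nleq q_n\}\cup\{n:p(y_n-y)\nleq r_n\}$, while you phrase it via intersecting two density-one ``good'' sets; these are the same computation, and both reduce the conclusion to the claim that $q_n+r_n\stpd 0$. Where you go beyond the paper is scalar homogeneity: the paper's proof establishes only $x_n+y_n\stpc x+y$ and never treats $\lambda x_n\stpc \lambda x$, so your version is actually the more complete reading of ``linear,'' and the homogeneity step is correct and immediate from $p(\lambda z)=|\lambda|\,p(z)$. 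One caution on the point you yourself flagged: $q_n+r_n\stpd 0$ requires $p(q_{n_m}+r_{n_m})\downarrow 0$ on a density-one set, and your fallback via $p(q_n+r_n)\le p(q_n)+p(r_n)$ plus Remark \ref{Remarks}(vi) does not quite deliver this, because domination by a decreasing-to-zero sequence does not make $p(q_{n_m}+r_{n_m})$ itself monotone. The paper has exactly the same gap --- it simply cites its unproved Remark \ref{Remarks}(v) --- so this does not put you behind the authors, but it is the one step in either argument that is not fully justified under the literal definition of $\stpd$.
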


\begin{proof}
	Suppose that $x_n\stpc x$ and $y_n\stpc y$ holds in an $LNS$ $(X,p,E)$. Then there exist sequences $q_n\stpd 0$ and $r_n\stpd 0$ such that
	$$
	\delta(\{n\in\mathbb{N}:p(x_n-x)\nleq q_n\})=0 $$
	and
	$$
	\delta(\{n\in\mathbb{N}:p(y_n-y)\nleq r_n\})=0
	$$
	hold. On the other hand, we observe the following inequality
	$$
	\{n\in\mathbb{N}:p(x_n+y_n-x-y)\nleq q_n+r_n\}\subseteq \{n\in\mathbb{N}:p(x_n-x)\nleq q_n\}\cup\{n\in\mathbb{N}:p(y_n-y)\nleq r_n\}.
	$$
	Hence, we have
	$$
	\delta\{n\in\mathbb{N}:p((x_n+y_n)-(x+y))\nleq q_n+r_n\}=0.
	$$
	It follows from $(q_n+r_n)^{st_p}\downarrow0$ that we get $x_n+y_n\stpc x+y$.
\end{proof}

The following result is a statistical $p$-version of the squeeze law.
\begin{proposition}
	Let $(X,p,E)$ be an $LNRS$ and $(x_n)$, $(y_n)$ and $(z_n)$ be sequences in $X$ such that $x_n\leq y_n\leq z_n$ for all $n\in\mathbb{N}$. Then $x_n\stpc x$ and $z_n\stpc x$ implies $y_n\stpc x$.
\end{proposition}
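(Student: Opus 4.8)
The plan is to combine the monotonicity of $p$ (available since $(X,p,E)$ is an $LNRS$) with the linearity of the statistical $p$-limit established in the previous Proposition. The order hypothesis $x_n\le y_n\le z_n$ gives $0\le y_n-x_n\le z_n-x_n$ in $X$, hence $|y_n-x_n|=y_n-x_n\le z_n-x_n=|z_n-x_n|$, and monotonicity of $p$ yields
\[
p(y_n-x_n)\le p(z_n-x_n)\qquad\text{for all }n\in\mathbb{N}.
\]
This is the only step that uses the lattice structure of $X$, and it is exactly what makes the statement fail in a general $LNS$.

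Next I would observe that $z_n-x_n\stpc 0$, which follows from $z_n\stpc x$ and $x_n\stpc x$ by linearity of the statistical $p$-limit (combining $z_n\stpc x$ with $(-1)x_n\stpc(-1)x$). Hence there is a sequence $w_n\stpd 0$ with $\delta\big(\{n\in\mathbb{N}:p(z_n-x_n)\nleq w_n\}\big)=0$. Since $p(y_n-x_n)\le p(z_n-x_n)$ for all $n$, we get the inclusion
\[
\{n\in\mathbb{N}:p(y_n-x_n)\nleq w_n\}\subseteq\{n\in\mathbb{N}:p(z_n-x_n)\nleq w_n\},
\]
so the left-hand set also has density $0$. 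As $w_n\stpd 0$, this says exactly that $y_n-x_n\stpc 0$, the same $w_n$ serving as witness because $p\big((y_n-x_n)-0\big)=p(y_n-x_n)$.

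Finally, writing $y_n=x_n+(y_n-x_n)$ and applying linearity once more to $x_n\stpc x$ and $y_n-x_n\stpc 0$, I obtain $y_n\stpc x+0=x$, which is the claim.

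The argument is mostly routine bookkeeping. The two points to keep straight are that linearity of the statistical $p$-limit is invoked in the form covering the difference $z_n-x_n$ and the sum $x_n+(y_n-x_n)$, and that the witness $w_n$ for $z_n-x_n\stpc 0$ is carried over verbatim. The only non-mechanical decision is to establish $y_n-x_n\stpc 0$ first and then add back $x_n$: a direct attempt to bound $p(y_n-x)$ forces one to combine two separate witnesses and confront the non-additivity of $p$, whereas here every inequality compares a single $p$-value with a single witness, with the monotonicity estimate of the first step doing all the real work.
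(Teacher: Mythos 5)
Your proof is correct, and it takes a genuinely different route from the paper's. The paper works directly with $p(y_{n_m}-x)$ on the intersection of the two index sets: from $x_n\le y_n\le z_n$ it derives the one-sided inequalities $y_n-x\le z_n-x$ and $x-y_n\le x-x_n$ and then bounds $p(y_{n_m}-x)\le q_{n_m}+r_{n_m}$, where $q$ and $r$ are the witnesses for $x_n\stpc x$ and $z_n\stpc x$; the correct justification is the positive/negative-part decomposition $|y_n-x|\le|z_n-x|+|x_n-x|$ (as written, the paper invokes monotonicity of $p$ on one-sided inequalities, which is a slip, since monotonicity requires an inequality between moduli). Your argument avoids this pitfall entirely: by comparing $0\le y_n-x_n\le z_n-x_n$ you only ever apply monotonicity to comparable \emph{nonnegative} elements, where the one-sided inequality genuinely is a modulus inequality, and each step compares a single $p$-value against a single witness. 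The price is that you lean twice on the linearity proposition (once for $z_n-x_n\stpc 0$, once to add back $x_n$), whereas the paper's proof is self-contained modulo the triangle inequality; note also that the linearity proposition in the paper only proves additivity explicitly, so you are implicitly using the (easy, but unstated) scalar-multiple case $-x_n\stpc -x$. Both proofs are valid; yours is the cleaner of the two.
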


\begin{proof}
	Suppose that $x_n\stpc x$ and $z_n\stpc x$. Then we have some sequences $q_n\stpd 0$ and $r_n\stpd0$ with an index set $\delta(M)=1$ such that $p(x_{n_m}-x)\leq q_{n_m}$ and $p(z_{n_m}-x)\leq r_{n_m}$ for all $n_m\in M$. On the other hand, following from the inequality $x_n\leq y_n\leq z_n$ for all $n$, we have $y_n-x\leq z_n-x$ and $x-y_n\leq x-y_n$, and so, $p(y_n-x)\leq p(z_n-x)$ and $p(y_n-x)\leq p(x_n-x)$ for all $n\in \mathbb{N}$ because of the monotony of $p$. So, it follows that
	$$
	p(y_n-x)\leq q_{n_m}+r_{n_m}
	$$
	for all $n_m\in M$. Therefore, the proof is completed by $(q_n+r_n)\stpd0$, i.e., $y_n\stc x$.
\end{proof}

It is clear that a subsequence of a $p$-convergent sequence is also $p$-convergent to the $p$-limit of the sequence. However, for statistical $p$-convergence, it need not be true. To see this, we give the following example. 
\begin{example}
	Consider a sequence $(x_n)$ in $LNRS$ $(\mathbb{R},|\cdot|,\mathbb{R})$ denoted by $x_n:=n$ whenever $n=k^3$, and by $x_n=\frac{1}{n}$ whenever $n\neq k^3$ for some $k\in\mathbb{N}$. Then it is clear that $x_n\stpc 0$. But the subsequence $(x_{n_k})$, where $n_k:=m^3$ for some $m\in\mathbb{N}$, does not statistical $p$-convergent.
\end{example}

We finish this section with the following notions, which are direct analogies of well-known facts of the theory of normed lattices.
\begin{definition}
	Let $(x_n)$ be a sequence in an $LNS$ $(X,p,E)$ and $(x_n)$ be a sequence in $X$.
	\begin{enumerate}
		\item[(1)] $(x_n)$ said to be {\em statistically $p$-Cauchy} whenever there exists a sequence $q_n\stpd 0$ with a set $\delta(K)=1$ such that we have $p(x_{n_k}-x_{n_{k-1}})\leq q_{n_k}$ for all $n_k\in K$.
		\item[(2)] $(x_n)$ said to be {\em statistically $p$-bounded} if there exists a positive element $e\in E_+$ and an index set $\delta(K)=1$ such that $p(x_{n_k})\leq e$ for all $n_k\in K$.
		\item[(3)] An $LNS$ $(X,p,E)$ is called statistical $p$-complete if every statistical $p$-Cauchy sequence in $X$ is statistical $p$-convergent.
		\item[(4)] An $LNRS$ $(X,p,E)$ is called {\em statistical $p$-$K$B-space} if every statistically $p$-bounded and increasing sequence in $X_+$ is statistical $p$-convergent.
	\end{enumerate}	
\end{definition}

We remind that a Banach lattice is said to be a {\em $KB$-space} whenever every increasing norm bounded positive sequence is norm convergent. The following lemma is a standard fact for normed spaces. 
\begin{lemma}\label{subseq criterion}
	Let $(X,\lVert\cdot\rVert)$ be a normed space. Then $x_n\xrightarrow{\lVert\cdot\rVert} x$ iff for any subsequence $x_{n_k}$ there is a further subsequence $x_{n_{k_j}}$ such that $x_{n_{k_j}}\xrightarrow{\lVert\cdot\rVert} x$.
\end{lemma}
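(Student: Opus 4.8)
The plan is to prove the two implications separately. The forward direction is essentially trivial, while the reverse direction will follow by a contrapositive argument that extracts a ``bad'' subsequence from the failure of convergence.

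For the \emph{only if} part, assume $x_n\xrightarrow{\lVert\cdot\rVert}x$. If $(x_{n_k})$ is an arbitrary subsequence, then $(x_{n_k})$ itself converges to $x$, since every subsequence of a norm-convergent sequence converges to the same limit. Hence the required further subsequence may be taken to be $(x_{n_k})$ itself, and this direction is settled.

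For the \emph{if} part I would argue by contraposition. Suppose $x_n$ does not norm-converge to $x$. Negating the $\varepsilon$-$N$ definition of convergence, there is an $\varepsilon_0>0$ for which the index set $\{n\in\mathbb{N}:\lVert x_n-x\rVert\geq\varepsilon_0\}$ is infinite; listing its elements increasingly as $n_1<n_2<\cdots$ produces a subsequence $(x_{n_k})$ with $\lVert x_{n_k}-x\rVert\geq\varepsilon_0$ for every $k$. Then any further subsequence $(x_{n_{k_j}})$ still satisfies $\lVert x_{n_{k_j}}-x\rVert\geq\varepsilon_0$ for all $j$, so no such subsequence can converge to $x$. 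This contradicts the hypothesized subsequence property, completing the proof.

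There is no genuine obstacle here; the only mild point requiring care is to negate the definition of norm convergence correctly so as to obtain an \emph{infinite} index set, and to observe that the inequality $\lVert x_{n_k}-x\rVert\geq\varepsilon_0$ is preserved under passing to any further subsequence.
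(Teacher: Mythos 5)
Your proof is correct and is the standard argument; the paper itself offers no proof of Lemma \ref{subseq criterion}, merely calling it ``a standard fact for normed spaces,'' so you have supplied exactly the routine contrapositive argument that was omitted. Both directions are handled properly, including the key point that the negation of convergence yields an \emph{infinite} index set on which $\lVert x_n-x\rVert\geq\varepsilon_0$, so that the resulting subsequence admits no further subsequence converging to $x$.
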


\begin{theorem}
	The following statements hold.
	\begin{enumerate}
		\item[(i)] Every statistical $p$-convergent sequence is statistical $p$-bounded.
		
		\item[(ii)] A statistical $p$-convergent sequence is statistical $p$-Cauchy. 
		
		\item[(iii)] Take a Banach lattice $(X,\lVert\cdot\rVert,\mathbb{R})$. Then it is a $KB$-space whenever it is statistical $p$-$KB$-space.
	\end{enumerate}
\end{theorem}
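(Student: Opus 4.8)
The plan is to dispatch the three parts separately. Parts (i) and (ii) are purely formal: they combine the definitions with the fact that a finite intersection of density-one sets again has density one and with the triangle inequality for $p$. Part (iii) is the only one with genuine content; it rests on reconciling the two notions of $KB$-space via a subsequence argument, and that is where the main obstacle lies. Throughout, one should keep in mind a small bookkeeping point: whenever a relation $x_n \stpc x$ is invoked, the dominating sequence $(q_n)$ comes with \emph{two} density-one index sets — one making it statistical $p$-decreasing to $0$ and one carrying the domination inequality $p(x_{n_k}-x)\le q_{n_k}$ — so in each part I would intersect them at the outset.

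For (i), from $x_n \stpc x$ I obtain a dominating sequence $(q_n)$ with $q_n \stpd 0$, hence a density-one set $A$ along which the $q$'s decrease to $0$, together with a density-one set $B$ on which $p(x_{n_k}-x)\le q_{n_k}$. Passing to $M:=A\cap B$ (still density one) and letting $n_{m_0}$ be its least element, the $q$'s remain decreasing along $M\subseteq A$, so $q_{n_m}\le q_{n_{m_0}}$ for every $n_m\in M$. Then $p(x_{n_m})\le p(x_{n_m}-x)+p(x)\le q_{n_m}+p(x)\le q_{n_{m_0}}+p(x)=:e\in E_+$ for all $n_m\in M$, which is exactly statistical $p$-boundedness.

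For (ii), I again take $(q_n)$ with $q_n\stpd 0$ and a density-one set $K=\{n_1<n_2<\cdots\}$ on which simultaneously $p(x_{n_k}-x)\le q_{n_k}$ and the $q$'s are decreasing. For $k\ge 2$ the triangle inequality gives $p(x_{n_k}-x_{n_{k-1}})\le p(x_{n_k}-x)+p(x_{n_{k-1}}-x)\le q_{n_k}+q_{n_{k-1}}\le 2q_{n_{k-1}}$. Setting $r_{n_k}:=2q_{n_{k-1}}$ for $k\ge 2$ produces a sequence that is again statistical $p$-decreasing to $0$ (a positive scalar multiple and a one-step reindexing of $(q_n)$ along $K$) and that dominates the consecutive differences along the density-one set $\{n_2<n_3<\cdots\}$. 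Hence $(x_n)$ is statistically $p$-Cauchy.

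For (iii), note first that here $p=\lVert\cdot\rVert$ and $E=\mathbb{R}$, so $p$-convergence is norm convergence, and since a bounded-above increasing sequence of reals converges, $q_n\stpd 0$ in $\mathbb{R}$ just yields a density-one set along which $q_{n_k}\downarrow 0$, in particular $q_{n_k}\to 0$. Now let $(x_n)\subseteq X_+$ be increasing and norm bounded, say $\lVert x_n\rVert\le M$; then $(x_n)$ is statistically $p$-bounded (take $K=\mathbb{N}$ and $e=M$), so the statistical $p$-$KB$ hypothesis gives $x_n\stpc x$ for some $x\in X$, i.e.\ there are a density-one set $K$ and $q_n\stpd 0$ with $\lVert x_{n_k}-x\rVert\le q_{n_k}$ on $K$; since $q_{n_k}\to 0$, the subsequence $(x_{n_k})_{n_k\in K}$ norm converges to $x$. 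The remaining step — the main obstacle — is to promote this to norm convergence of the whole sequence: as $(x_{n_k})$ is increasing and the positive cone of a Banach lattice is closed, $x_{n_k}\le x$ for all $k$, hence $x_n\le x_{n_k}\le x$ whenever $n_k\ge n$, so $0\le x-x_n$ is decreasing; for $n\ge n_k$ monotonicity of the lattice norm yields $\lVert x-x_n\rVert\le\lVert x-x_{n_k}\rVert$, whence $\lVert x-x_n\rVert\to 0$. Thus every increasing norm bounded positive sequence norm converges, i.e.\ $X$ is a $KB$-space. The only place the specific Banach-lattice structure (closedness of $X_+$, monotonicity of $\lVert\cdot\rVert$) is used is precisely this last classical passage from a norm-convergent subsequence of an increasing sequence to the full sequence; everything else is manipulation of density-one sets.
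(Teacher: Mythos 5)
Your proofs of (i) and (ii) are correct and essentially the paper's own arguments (triangle inequality plus the monotonicity of $(q_{n_k})$ along the density-one index set); your extra care in intersecting the two density-one sets implicit in $q_n\stpd 0$ and in the domination inequality is a welcome tightening that the paper glosses over. For (iii) you take a genuinely different, and in fact cleaner, route. The paper applies the statistical $p$-$KB$ hypothesis to an \emph{arbitrary subsequence} $(x_{n_m})$, extracts a further norm-convergent sub-subsequence, and invokes the subsequence criterion (Lemma 3.11) to conclude norm convergence of the whole sequence; this requires that every subsequence produce the \emph{same} limit $x$, a point the paper does not address (it is true here, but only because of the monotonicity argument you supply). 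You instead apply the hypothesis once to the full sequence to get $x_n\stpc x$, extract a single density-one set along which $\lVert x_{n_k}-x\rVert\le q_{n_k}\to 0$, and then use the classical fact that an increasing sequence in a Banach lattice with a norm-convergent subsequence converges in norm to that same limit (closedness of $X_+$ gives $x_{n_k}\le x$, and monotonicity of the norm gives $\lVert x-x_n\rVert\le\lVert x-x_{n_k}\rVert$ for $n\ge n_k$). This avoids the subsequence-criterion lemma entirely and quietly repairs the well-definedness issue in the paper's version; the paper's approach, on the other hand, is the one that would generalize to settings where no monotone structure is available. Both arguments are valid.
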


\begin{proof}
	$(i):$ Assume that $x_n\stpc x$ in an $LNS$ $(X,p,E)$. Then there exists a sequence $q_n\stpd0$ with a set $\delta(K)=1$ such that $p(x_{n_k}-x)\leq q_{n_k}$ for every $n_k\in K$. It follows from the $q_{n_k}\downarrow$ on $K$ that we have the following inequality 
	$$
	p(x_{n_k})\leq p(x_{n_k}-x)+p(x)\leq q_{n_k}+p(x)\leq q_{n_1}+p(x)
	$$
	for all $n_k\in K$. So, we obtain $p(x_{n_k})\leq q_{n_1}+p(x)\in E_+$ for each $n_k\in K$, i.e., $(x_n)$ is statistical $p$-bounded.
	
	$(ii)$ Suppose that $x_n\stpc x$ satisfies in an $LNS$, and so, there exists a sequence $q_n\stpd0$ with a set $\delta(K)=1$ such that $p(x_{n_k}-x)\leq q_{n_k}$ for each $n_k\in K$. Hence, for an arbitrary $n_k\in K$, we have
	$$
	p(x_{n_k}-x_{n_{k-1}})\leq p(x_{n_k}-x)+p(x_{n_{k-1}}-x)\leq q_{n_k}+q_{n_{k-1}}.
	$$
	Following from $(q_{n_k}+q_{n_{k-1}})\stpd0$, we get the desired result.
	
	$(iii):$ Assume that $0\leq x_n\uparrow$ and norm bounded sequence. Then, by taking the index set $K=\mathbb{N}$, $(x_n)$ is a statistical bounded sequence. Now, take an arbitrary subsequence $(x_{n_m})$ of $(x_n)$. Then $(x_{n_m})$ is also positive, increasing and statistical $p$-bounded sequence in $X$. Thus, $(x_{n_m})$ statistical $p$-converges to an element $x\in X$ because of the statistical $p$-$KB$-property of $X$. So, there exists a sequence $q_n\stpd0$ in $\mathbb{R}$ with an index set $\delta(J)=1$ such that $\lVert x_{n_{m_j}}-x\rVert\leq q_{n_{m_j}}$ for all $n_{m_j}\in J$. Therefore, we obtain $x_{n_{m_j}}\xrightarrow{\lVert\cdot\rVert} x$. It follows from Lemma \ref{subseq criterion} that $x_n\xrightarrow{\lVert\cdot\rVert} x$. As a result, $(X,\lVert\cdot\rVert,\mathbb{R})$ is a $KB$-space.
\end{proof}

\begin{proposition}
	A statistical $p$-$KB$-space $(X,\lVert\cdot\rVert,\mathbb{R})$ is statistical $p$-complete.
\end{proposition}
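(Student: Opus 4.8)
The strategy is to reduce statistical $p$-completeness of $(X,\lVert\cdot\rVert,\mathbb{R})$ to the statistical $p$-$KB$-property by writing a statistically $p$-Cauchy sequence, up to a fixed translation, as the difference of two increasing positive sequences produced by telescoping its consecutive increments.

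First I would unpack the hypothesis: if $(x_n)$ is statistically $p$-Cauchy, there is an index set $K=\{n_1<n_2<\cdots\}$ with $\delta(K)=1$ and a sequence $q_n\stpd 0$ such that $\lVert x_{n_k}-x_{n_{k-1}}\rVert\leq q_{n_k}$ for every $n_k\in K$. Since $(X,\lVert\cdot\rVert,\mathbb{R})$ is an $LNRS$, $X$ is a vector lattice, so I set $d_k:=x_{n_k}-x_{n_{k-1}}$ and split $d_k=d_k^{+}-d_k^{-}$ with $0\leq d_k^{\pm}\leq|d_k|$; by monotonicity of the norm, $\lVert d_k^{\pm}\rVert\leq\lVert d_k\rVert\leq q_{n_k}$. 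Putting $A_m:=\sum_{k=2}^{m}d_k^{+}$ and $B_m:=\sum_{k=2}^{m}d_k^{-}$, both $(A_m)$ and $(B_m)$ are increasing sequences in $X_{+}$, and $x_{n_m}=x_{n_1}+A_m-B_m$ for all $m$.

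The crux is to show that $(A_m)$ and $(B_m)$ are statistically $p$-bounded; this is where the real content lies, since one has to convert the fact that the bounding quantities $q_{n_k}$ are statistically decreasing to zero into a uniform norm bound, on a density-one set of indices, for the partial sums $A_m$ and $B_m$. I expect this step to be the main obstacle. Granting it, the statistical $p$-$KB$-property of $(X,\lVert\cdot\rVert,\mathbb{R})$ provides $a,b\in X$ with $A_m\stpc a$ and $B_m\stpc b$; by the linearity of the statistical $p$-limit proved above, $x_{n_m}=x_{n_1}+A_m-B_m\stpc z$ with $z:=x_{n_1}+a-b$. Finally, since $\delta(K)=1$ and natural density is unaffected by reindexing along a density-one set, intersecting the density-one index sets that witness $A_m\stpc a$ and $B_m\stpc b$ yields a density-one subset of $\mathbb{N}$ on which $\lVert x_n-z\rVert$ is dominated by a sequence that is statistically $p$-decreasing to $0$; hence $x_n\stpc z$. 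Thus every statistically $p$-Cauchy sequence is statistically $p$-convergent, i.e. $(X,\lVert\cdot\rVert,\mathbb{R})$ is statistical $p$-complete.
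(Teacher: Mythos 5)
Your proposal defers its central step --- the statistical $p$-boundedness of the telescoped sums $A_m=\sum_{k=2}^{m}d_k^{+}$ and $B_m=\sum_{k=2}^{m}d_k^{-}$ --- and that step cannot be carried out from the hypotheses. The statistically $p$-Cauchy condition only gives $\lVert x_{n_k}-x_{n_{k-1}}\rVert\leq q_{n_k}$ with $q_{n_k}\downarrow 0$ on a density-one set; it gives no summability of the $q_{n_k}$. The best estimate available is $\lVert A_m\rVert\leq\sum_{k=2}^{m}q_{n_k}$, which may diverge. Concretely, in $X=\mathbb{R}$ (which is a statistical $p$-$KB$-space) the harmonic partial sums $x_n=\sum_{k\leq n}\frac{1}{k}$ satisfy the definition of statistically $p$-Cauchy with $K=\mathbb{N}$ and $q_n=\frac{1}{n}\downarrow 0$, yet here $A_m=x_{n_m}-x_{n_1}$ is increasing and unbounded, and no density-one set of indices can make an increasing unbounded real sequence bounded. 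Since every statistically $p$-convergent sequence is statistically $p$-bounded (part (i) of the theorem at the end of Section 3), this example shows the obstruction is not a technicality of your decomposition: with the paper's definition of ``statistically $p$-Cauchy'' (which controls only consecutive differences), the implication you are trying to prove fails, so no completion of your argument is possible without strengthening either the Cauchy condition (to control $p(x_{n_k}-x_{n_j})$ for all pairs, or to require $\sum q_{n_k}<\infty$) or the hypotheses.

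For comparison, the paper's own proof takes a different and much shorter route: it asserts that the subsequence $(x_{n_k})$ is ($p$-)Cauchy and invokes an exercise of Zaanen to produce a limit, never really using the telescoping idea. That assertion rests on the same unjustified leap from ``consecutive differences decrease to zero'' to ``Cauchy,'' so it founders on the same example. Your decomposition into two increasing positive sequences is indeed the standard way to bring a $KB$-type hypothesis to bear, and it would work if the increments were summable in norm; the gap you flagged as the ``main obstacle'' is precisely where the statement, as formulated, breaks down.
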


\begin{proof}
	Assume $(X,\lVert\cdot\rVert,\mathbb{R})$ is a statistical $p$-$KB$-space and $(x_n)$ is a statistical $p$-Cauchy sequence in $X$. Then there exists a sequence $q_n\stpd 0$ with a set $\delta(K)=1$ such that $p(x_{n_k}-x_{n_{k-1}})\leq q_{n_k}$ for all $n_k\in K$. Thus, $(x_{n_k})$ is a $p$-Cauchy. Hence, it follows from \cite[Exer.95.4]{Za} that $(x_{n_k})$ is $p$-convergent to some $x\in X$, and so, it is statistical $p$-convergent to $x$. Therefore, $(X,\lVert\cdot\rVert,\mathbb{R})$ is a statistical $p$-complete $LNS$.
\end{proof}

\section{Main Results}
We remind that if $(x_n)$ is a sequence satisfying the property $P$ for all $n\in \mathbb{N}$ except a set of natural density zero then we say that $(x_n)$ satisfies the property $P$ for almost all $n$, and it is abbreviated by a.a.n. \cite{Fridy}.
\begin{theorem}
	Let $(X,p,E)$ be an $LNS$ and $(x_n)$ be a sequence in $X$. Then $x_n\stpc x$ holds if and only if there is another sequence $(y_n)$ in $X$ such that $x_n=y_n$ for a.a.n and $y_n\stpc x$.
\end{theorem}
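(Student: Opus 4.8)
The plan is to treat the two implications separately; the forward direction is immediate, and the reverse direction is where the (modest) work lies. For the ``only if'' part I would simply take $y_n:=x_n$ for every $n\in\mathbb{N}$: then $x_n=y_n$ for all $n$ (in particular for a.a.n.) and $y_n=x_n\stpc x$ by hypothesis, so there is nothing further to verify.

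For the ``if'' part, suppose $x_n=y_n$ for a.a.n. and $y_n\stpc x$. The first hypothesis unwinds to a set $A\subseteq\mathbb{N}$ with $\delta(A)=1$ such that $x_n=y_n$ for all $n\in A$. The second, by the definition of statistical $p$-convergence, yields a sequence $q_n\stpd 0$ together with a set $K=\{n_1<n_2<\cdots\}$ with $\delta(K)=1$ such that $p(y_{n_k}-x)\leq q_{n_k}$ for every $n_k\in K$. I would then set $M:=A\cap K$ and check that $\delta(M)=1$: since $\mathbb{N}\setminus M=(\mathbb{N}\setminus A)\cup(\mathbb{N}\setminus K)$ and each of $\mathbb{N}\setminus A$ and $\mathbb{N}\setminus K$ has natural density $0$, subadditivity of the counting functionals $\frac{1}{n}|\{k\le n:\,\cdot\,\}|$ forces $\delta(\mathbb{N}\setminus M)=0$, i.e.\ $\delta(M)=1$.

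Finally, for each $n_m\in M$ we have $n_m\in A$, hence $x_{n_m}=y_{n_m}$, and $n_m\in K$, hence $p(y_{n_m}-x)\leq q_{n_m}$; combining these gives $p(x_{n_m}-x)=p(y_{n_m}-x)\leq q_{n_m}$. Since the same $q_n\stpd 0$ continues to serve as the dominating sequence, this exhibits, via the density-one set $M$, a witness for $x_n\stpc x$, completing the proof.

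The only point that deserves a line of care is that replacing $K$ by the smaller density-one set $M$ does not disturb the requirement ``$q_n\stpd 0$'': that is a property of the sequence $(q_n)$ alone, witnessed by its own density-one index set independently of $K$ or $M$, so it is inherited unchanged. Beyond that the argument is entirely routine, the external facts used being merely that a finite union of density-zero sets has density zero and that $p$ is a well-defined function (so $x_{n_m}=y_{n_m}$ forces $p(x_{n_m}-x)=p(y_{n_m}-x)$); I do not anticipate any genuine obstacle.
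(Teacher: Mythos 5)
Your proposal is correct and takes essentially the same approach as the paper: intersect the density-one set on which $x_n=y_n$ with the density-one index set witnessing $y_n\stpc x$, observe the intersection still has density one, and reuse the same dominating sequence $q_n\stpd 0$. If anything, your write-up is slightly more careful than the paper's, which leaves the trivial ``only if'' direction (taking $y_n:=x_n$) implicit and is vaguer about how the subset $M$ of density one is produced.
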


\begin{proof}
	Suppose that there exists a sequence $(y_n)$ in $X$ such that $x_n=y_n$ for a.a.n and $y_n\stpc x$. Then there exists a sequence $q_n\stpd0$ with a set $\delta(K)=1$ such that $p(y_{n_k}-x)\leq q_{n_k}$ for every $n_k\in K$. Since $x_n=y_n$ for a.a.n, we observe that the index sets $\{n_k\in K:p(x_{n_k}-x)\leq q_{n_k}\}$ and $\{n_k\in K:p(y_{n_k}-x)\leq q_{n_k}\}$ are a.a.n. Thus, there exists a subset $M\subseteq K$ such that $\delta(M)=1$ and $p(x_{n_m}-x)\leq q_{n_m}$ for every $n_m\in M$. Therefore, the desired result is to appear, i.e., we get $x_n\stpc x$.
\end{proof}
The lattice operations in $LNRS$s are statistically $p$-continuous
in the following sense.
\begin{theorem}\label{LO are continuous}
	If $x_n\stpc x$ and $y_n\stpc y$ satisfy then $x_n\vee y_n\stpc x\vee y$ and $x_n\wedge y_n\stpc x\wedge y$ in an $LNRS$ .
\end{theorem}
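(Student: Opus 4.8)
The plan is to reduce both assertions to the join $\vee$, prove that case from the elementary vector-lattice inequality $|a\vee b-c\vee d|\le|a-c|+|b-d|$ transported through the monotone vector norm $p$, and then read off the meet $\wedge$ from linearity of the statistical $p$-limit. So I first carry out the bookkeeping with the index sets, then do the join, then the meet.

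First I would unpack the hypotheses. From $x_n\stpc x$ there are a sequence $q_n\stpd 0$ and a set $K$ with $\delta(K)=1$ on which $p(x_{n_k}-x)\le q_{n_k}$; after intersecting $K$ with the density-one set that witnesses $q_n\stpd 0$, I may also assume $p(q_{n_k})\downarrow 0$ along $K$. Doing the same for $y_n\stpc y$ produces $r_n\stpd 0$ and $L$ with $\delta(L)=1$, $p(y_{n_l}-y)\le r_{n_l}$, and $p(r_{n_l})\downarrow 0$ along $L$. Put $M:=K\cap L$, so that $\delta(M)=1$; since a subsequence of a sequence whose $p$-image decreases to $0$ still has that property, along $M$ I simultaneously have the two domination inequalities together with $p(q_{n_m})\downarrow 0$ and $p(r_{n_m})\downarrow 0$, whence $q_n+r_n\stpd 0$, exactly as in the earlier proofs of this paper.

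Next, for the join, for each $n_m\in M$ the Birkhoff-type inequality in the vector lattice $X$ gives $|x_{n_m}\vee y_{n_m}-x\vee y|\le|x_{n_m}-x|+|y_{n_m}-y|$. Because $p$ is monotone — this is where the $LNRS$ hypothesis is genuinely used — and $p(|u|)=p(u)$ for all $u\in X$, applying $p$ and using its subadditivity yields
$$
p\bigl(x_{n_m}\vee y_{n_m}-x\vee y\bigr)\le p(x_{n_m}-x)+p(y_{n_m}-y)\le q_{n_m}+r_{n_m}
$$
for every $n_m\in M$. Since $q_n+r_n\stpd 0$ and $\delta(M)=1$, this is precisely $x_n\vee y_n\stpc x\vee y$. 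For the meet I would then invoke the identity $x_n\wedge y_n=x_n+y_n-x_n\vee y_n$ together with the already-established linearity of the statistical $p$-limit and the join case just proved, giving $x_n\wedge y_n\stpc x+y-x\vee y=x\wedge y$; alternatively one repeats the argument above verbatim with $|a\wedge b-c\wedge d|\le|a-c|+|b-d|$.

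The hard part here is not hard: the only point demanding a little care is the bookkeeping that merges the domination index set with the one witnessing statistical $p$-decrease for $q_n$ and $r_n$, so that on the common density-one set the dominating sequence $q_n+r_n$ is still genuinely statistically $p$-decreasing. Everything downstream is the Birkhoff inequality plus monotonicity of $p$, which is routine once the index sets are in place.
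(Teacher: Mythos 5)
Your proof is correct and follows essentially the same route as the paper's: reduce the meet to the join via $x\wedge y=x+y-x\vee y$ and linearity of the statistical $p$-limit, intersect the two density-one index sets, and dominate $p(x_{n_m}\vee y_{n_m}-x\vee y)$ by $q_{n_m}+r_{n_m}$ using the lattice inequality and monotonicity of $p$ (the paper derives $|a\vee b-c\vee d|\le|a-c|+|b-d|$ in two steps from $|a\vee b-a\vee c|\le|b-c|$, but that is only a cosmetic difference). Your explicit bookkeeping of the index sets witnessing $q_n\stpd 0$ and $r_n\stpd 0$ is, if anything, more careful than the original.
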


\begin{proof}
	Suppose that $x_n\stpc x$ and $y_n\stpc y$ holds in an $LNRS$ $(X,p,E)$. Then it is enough to show that $x_n\vee y_n\stpc x\vee y$ because the other case can be obtained by applying both \cite[Thm.11.5(iv)]{LZ} and the linearity of the statistical $p$-limit. Since $x_n\stpc x$ and $y_n\stpc y$, there are two sequences $q_n\stpd0$ and $r_n\stpd0$ with index sets $\delta(K)=\delta(L)=1$ such that
	$$
	p(x_{n_k}-x)\leq q_{n_k} \ \ \text{and} \ \ p(x_{n_l}-y)\leq r_{n_l}
	$$
	hold for all $n_k\in K$ and $n_l\in L$. It follows from the inequality $|a\vee b-a\vee c|\leq|b-c|$ (cf. \cite[Thm.1.9(2)]{ABPO}) in Riesz spaces that we have
	\begin{eqnarray*}
		p(x_n\vee y_n-x\vee y)&=&p(|x_n\vee y_n-x_n\vee y+x_n \vee y-x\vee y|)\\&\leq& p(|x_n\vee y_n-x_n\vee y|)+p(|x_n \vee y-x\vee y|)\\&\leq& p(|y_n-y|)+p(|x_n-x|)
	\end{eqnarray*}
	for all $n\in\mathbb{N}$. Thus, we obtain 
	$$
	p(x_{n_m}\vee y_{n_m}-x\vee y)\leq p(y_{n_m}-y)+p(x_{n_m}-x)\leq q_{n_m}+r_{n_m}
	$$
	for each $n_m\in M$, where $M:=K\cap L$. It follows from $(q_{n_km}+r_{n_m})\stpd 0$ that $x_n\vee y_n\stpc x\vee y$.
\end{proof}

\begin{corollary}
	If $x_n\stpc x$ satisfies in an $LNRS$ then the following statements hold
	\begin{enumerate}
		\item[(i)] $x_n^+\stpc x^+$,
		\item[(ii)] $x_n^-\stpc x^-$,
		\item[(iii)] $|x_n|\stpc |x|$.
	\end{enumerate}
\end{corollary}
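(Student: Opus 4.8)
The plan is to derive all three statements directly from Theorem \ref{LO are continuous}, using the identities $x^+ = x \vee 0$, $x^- = (-x)\vee 0$ and $|x| = x \vee (-x)$ recalled in Section~2, together with the linearity of the statistical $p$-limit and the observation (made right after the definition of statistical $p$-convergence) that every $p$-convergent sequence is statistically $p$-convergent to its $p$-limit. The only preliminary remark needed is that the constant sequence $0_n := 0$ statistically $p$-converges to $0$: indeed $p(0_n) = 0 \oc 0 = p(0)$, so $0_n \pc 0$ and hence $0_n \stpc 0$.

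For (i), I would apply Theorem \ref{LO are continuous} to the pair $x_n \stpc x$ and $0_n \stpc 0$, which gives $x_n \vee 0_n \stpc x \vee 0$, that is, $x_n^+ \stpc x^+$. For (ii), first note that linearity of the statistical $p$-limit (scalar $\lambda = -1$) turns $x_n \stpc x$ into $-x_n \stpc -x$; then the same argument as in (i), now applied to $-x_n \stpc -x$ and $0_n \stpc 0$, yields $(-x_n)\vee 0 \stpc (-x)\vee 0$, i.e. $x_n^- \stpc x^-$. For (iii), I would apply Theorem \ref{LO are continuous} once more, this time to $x_n \stpc x$ and $-x_n \stpc -x$, obtaining $x_n \vee (-x_n) \stpc x \vee (-x)$, i.e. $|x_n| \stpc |x|$; alternatively one can combine (i) and (ii) via $|x_n| = x_n^+ + x_n^-$ and linearity.

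I do not expect any genuine obstacle here: the corollary is a formal consequence of Theorem \ref{LO are continuous} and linearity. The only point that deserves to be written out explicitly, rather than left implicit, is that the constant zero sequence qualifies as a statistically $p$-convergent sequence (so that Theorem \ref{LO are continuous} is applicable with one of its two inputs being $0_n \stpc 0$), and that the passage from $x_n$ to $-x_n$ is legitimate precisely because scalar multiples of statistically $p$-convergent sequences converge to the corresponding scalar multiple of the limit.
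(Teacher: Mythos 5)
Your proof is correct. For parts (i) and (ii) it coincides with the paper's argument, which simply declares these to be direct consequences of Theorem \ref{LO are continuous}; your added care in noting that the constant sequence $0\stpc 0$ and that $-x_n\stpc -x$ (via scalar linearity) is exactly what makes that invocation legitimate, so this is a useful explication rather than a deviation. The only genuine difference is in part (iii): the paper gives a direct estimate, taking the dominating sequence $q_n\stpd 0$ from $x_n\stpc x$ and using the Riesz-space inequality $\bigl||a|-|b|\bigr|\leq|a-b|$ together with monotonicity of $p$ to get $p(|x_{n_k}|-|x|)\leq p(|x_{n_k}-x|)\leq q_{n_k}$ on the same index set $K$ (the paper misprints the inequality as $\bigl||a|-|b|\bigr|\leq|a|+|b|$, but uses the correct one in the displayed computation). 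You instead reuse Theorem \ref{LO are continuous} on the pair $x_n\stpc x$, $-x_n\stpc -x$ to get $x_n\vee(-x_n)\stpc x\vee(-x)$, or equivalently combine (i) and (ii) with additivity. Both are valid; the paper's route yields the same dominating sequence $q_n$ on the same index set, while yours is more uniform (everything reduces to one continuity theorem) at the cost of passing to an intersection of index sets and a sum of dominating sequences inside the theorem's proof.
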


\begin{proof}
	The straightforward proofs of $(i)$ and $(ii)$ are direct results of Theorem \ref{LO are continuous}. We show $(iii)$. Since $x_n\stpc x$, we have another sequence $q_n\stpd 0$ with an index set $\delta(K)=1$ such that $p(x_{n_k}-x)\leq q_{n_k}$ for all $n_k\in K$. Then, by using the inequality $||a|-|b||\leq|a|+|b|$ in Riesz spaces and by applying the monotony of $p$, we have
	$$
	p(|x_{n_k}|-|x|)=p(||x_{n_k}|-|x||)\leq p(|x_{n_k}-x|)\leq q_{n_k}
	$$
	for every $n_k\in K$. Therefore, we get $|x_n|\stpc |x|$.
\end{proof}

\begin{proposition}\label{positive}
	Let $(x_n)$ be a positive sequence in an $LNRS$. Then $x_n\stpc x$ implies $x\geq0$.
\end{proposition}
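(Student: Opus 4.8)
The plan is to exploit the fact that positivity of the sequence means $x_n^{-}=0$ for every $n$, and then to push this equality through to the limit. The cleanest route is the direct one, avoiding any appeal to earlier propositions beyond the definition of statistical $p$-convergence and the axioms of the vector norm. So first I would unpack the hypothesis $x_n\stpc x$: there is a sequence $q_n\stpd 0$ together with an index set $K=\{n_1<n_2<\cdots\}$ with $\delta(K)=1$ such that $p(x_{n_k}-x)\le q_{n_k}$ for all $n_k\in K$, and moreover $p(q_{n_k})\downarrow 0$ on $K$ — in particular $\inf_{n_k\in K}q_{n_k}=0$ in $E$.

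Next I would use the standard Riesz-space inequality $|a^{-}-b^{-}|\le|a-b|$ (a companion of $|a^{+}-b^{+}|\le|a-b|$ already used in the paper) with $a=x$ and $b=x_{n_k}$. Since $x_{n_k}\ge 0$ forces $x_{n_k}^{-}=0$, this yields $|x^{-}|=|x^{-}-x_{n_k}^{-}|\le|x-x_{n_k}|$ for every $n_k\in K$. Applying the monotonicity of $p$ (available because we are in an $LNRS$) together with $p(u)=p(|u|)$, I get
$$
p(x^{-})=p(|x^{-}|)\le p(|x-x_{n_k}|)=p(x_{n_k}-x)\le q_{n_k}
$$
for all $n_k\in K$.

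Finally I would take the infimum over $n_k\in K$. Since $q_{n_k}\downarrow 0$ on $K$, the right-hand side has infimum $0$, and as $p(x^{-})$ is a fixed lower bound for $\{q_{n_k}:n_k\in K\}$ it follows that $p(x^{-})\le 0$; combined with $p(x^{-})\in E_{+}$ this gives $p(x^{-})=0$, and then axiom (1) of the vector norm gives $x^{-}=0$, i.e.\ $x\ge 0$. The one place that needs a little care is the last step — making sure the passage from ``$p(x^{-})\le q_{n_k}$ for all $n_k\in K$'' to ``$p(x^{-})\le\inf_{K}q_{n_k}=0$'' is justified by the fact that $p(x^{-})$ does not depend on $k$ and $E_{+}$ is a proper cone — but this is routine. (As an alternative one could instead invoke Corollary above, which gives $x_n^{-}\stpc x^{-}$, note that $x_n^{-}=0$ is $p$-convergent hence statistically $p$-convergent to $0$, and conclude $x^{-}=0$ by uniqueness of the statistical $p$-limit from Proposition~\ref{uniqness}; I would mention this as a remark.)
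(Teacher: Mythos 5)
Your proof is correct, but it takes a genuinely different and more elementary route than the paper. The paper's own argument is a two-line corollary of Theorem \ref{LO are continuous}: from $0\leq x_n=x_n\vee 0\stpc x\vee 0=x^{+}$ and the uniqueness of statistical $p$-limits (Proposition \ref{uniqness}) it concludes $x=x^{+}\in X_{+}$ --- essentially the alternative you sketch in your closing parenthesis, phrased with $\cdot\vee 0$ rather than with negative parts. Your main argument instead works straight from the definition: the lattice inequality $|a^{-}-b^{-}|\leq|a-b|$ together with $x_{n_k}^{-}=0$ yields the constant lower bound $p(x^{-})\leq q_{n_k}$ on an index set of density one, and since $q_{n_k}\downarrow 0$ there, $p(x^{-})=0$, whence $x^{-}=0$ by axiom (1) of the vector norm. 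This closing move (a fixed element of $E_{+}$ dominated by a sequence decreasing to $0$ must be $0$) is exactly the one the paper itself uses to prove Proposition \ref{uniqness}, so it is unobjectionable; what your route buys is independence from Theorem \ref{LO are continuous}, at the cost of invoking one extra lattice identity and the monotonicity of $p$ directly. One small wording caveat: you assert both ``$p(q_{n_k})\downarrow 0$ on $K$'' and ``in particular $\inf_{K}q_{n_k}=0$ in $E$''; the second does not literally follow from the first, and the paper's definition is itself ambiguous about whether the dominating sequence lives in $X$ or in $E$. Since the paper's proofs consistently treat $(q_{n_k})$ as a sequence in $E$ decreasing to $0$ on $K$ (which is the reading your final step requires), you are consistent with its usage, but it would be cleaner to state that reading once and drop the ``$p(q_{n_k})$'' clause.
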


\begin{proof}
	Assume that $x_n\stpc x$ holds in an $LNRS$ $(X,p,E)$. Then it follows from Theorem \ref{LO are continuous} that 
	$$
	0\leq x_n=x_n\vee0\stpc x\vee0=x^+\in X_+.
	$$
	Thus, by considering Proposition \ref{uniqness}, we know that the statistical $p$-limit of a sequence is unique. Therefore, we obtain the desired result, $x=x_+\in X_+$.
\end{proof}

In the following theorem, we give a relation between statistical $p$-convergence and order convergence.
\begin{theorem}\label{implies order conv}
	Every monotone decreasing and statistical $p$-convergent sequence order converges to its statistical $p$-limit. 
\end{theorem}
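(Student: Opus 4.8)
The plan is to reduce the statement to showing that the sequence actually decreases to its statistical $p$-limit in the order sense, i.e.\ that $x_n\downarrow x$, after which order convergence is immediate. So fix an $LNRS$ $(X,p,E)$, a monotone decreasing sequence $x_n\downarrow$ in $X$, and assume $x_n\stpc x$. Then there are an index set $K=\{n_1<n_2<\cdots\}$ with $\delta(K)=1$ and a sequence $q_n\stpd 0$ with $q_{n_k}\downarrow 0$ on $K$ such that $p(x_{n_k}-x)\le q_{n_k}$ for all $n_k\in K$. The natural candidate for an order-null dominating sequence is $q'_n:=x_n-x$: once we know $x\le x_n$ for every $n$ and $\inf_n x_n=x$, we get $q'_n\downarrow 0$ and $|x_n-x|=x_n-x=q'_n$, which is precisely $x_n\oc x$.

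First I would prove that $x$ is a lower bound of $(x_n)$. Fix $n$ and pick any $n_k\in K$ with $n_k\ge n$; since $(x_n)$ is decreasing, $x_{n_k}\le x_n$, hence
$$
0\le (x-x_n)^+\le (x-x_{n_k})^+=(x_{n_k}-x)^-\le |x_{n_k}-x| .
$$
Applying the monotonicity of the lattice norm $p$ gives $p\bigl((x-x_n)^+\bigr)\le p(x_{n_k}-x)\le q_{n_k}$ for every $n_k\in K$ with $n_k\ge n$. Since $(q_{n_k})_{n_k\in K}$ is decreasing with infimum $0$, each of its tails has infimum $0$ as well, and $\{n_k\in K:n_k\ge n\}$ is cofinal in $K$; therefore $p\bigl((x-x_n)^+\bigr)\le \inf\{q_{n_k}:n_k\in K,\ n_k\ge n\}=0$, so $p\bigl((x-x_n)^+\bigr)=0$ and hence $(x-x_n)^+=0$ by the first vector-norm axiom, i.e.\ $x\le x_n$.

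Next I would check $x=\inf_n x_n$. Let $y\in X$ with $y\le x_n$ for all $n$; for every $n_k\in K$ we then have $y-x\le x_{n_k}-x$, so $0\le (y-x)^+\le (x_{n_k}-x)^+\le |x_{n_k}-x|$, and as before $p\bigl((y-x)^+\bigr)\le q_{n_k}$ for all $n_k\in K$. Taking the infimum over $K$ yields $p\bigl((y-x)^+\bigr)=0$, hence $y\le x$. Thus $x_n\downarrow x$; with $q'_n:=x_n-x\downarrow 0$ and $|x_n-x|=q'_n$ we conclude $x_n\oc x$, and since $x$ is the statistical $p$-limit of $(x_n)$, the sequence order converges to its statistical $p$-limit.

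The main obstacle is the step that ``propagates'' the estimate $p(x_{n_k}-x)\le q_{n_k}$, which is only available on the density-one set $K$, to all indices $n$: this is exactly where monotonicity of $(x_n)$ is indispensable, because it lets us dominate the fixed positive element $(x-x_n)^+$ by $q_{n_k}$ for all sufficiently large $n_k\in K$ and then pass to the infimum. The only other point requiring care is the elementary but essential fact that a decreasing sequence shares its infimum with each of its tails, which legitimizes passing to the infimum over the cofinal subfamily $\{q_{n_k}:n_k\in K,\ n_k\ge n\}$; everything else is routine use of the Riesz-space identities $z^+\le|z|$ and $u\le v\Rightarrow u^+\le v^+$ together with the monotonicity of $p$.
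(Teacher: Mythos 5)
Your proof is correct, and it follows the same skeleton as the paper's --- both reduce the theorem to showing $x_n\downarrow x$ by verifying that $x$ is a lower bound and then that it is the greatest lower bound --- but the engine driving each step is different. The paper dispatches both steps in one line each by invoking Proposition \ref{positive} (a positive sequence has a positive statistical $p$-limit), applied to $(x_m-x_n)_n$ and to $(x_n-y)_n$; that proposition in turn rests on Theorem \ref{LO are continuous} and the uniqueness of statistical $p$-limits, so the paper's argument is short but sits on a tower of earlier results. You instead work directly from the definition: you dominate the fixed positive element $(x-x_n)^+$ (resp.\ $(y-x)^+$) by the cofinal tail of the decreasing-to-zero dominating sequence $(q_{n_k})_{n_k\in K}$, pass to the infimum, and conclude from $p(u)=0\Rightarrow u=0$. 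This is self-contained, uses only the norm axioms and elementary Riesz-space identities, and as a bonus it sidesteps a small looseness in the paper's version, where Proposition \ref{positive} is applied to the sequence $(x_m-x_n)_n$ even though that sequence is positive only for $n\ge m$ (harmless, since finitely many exceptions have density zero, but left implicit there). All the delicate points in your argument --- cofinality of $\{n_k\in K: n_k\ge n\}$ in the infinite set $K$, the tail of a decreasing sequence sharing its infimum, and $p(u)\le p(|v|)=p(v)$ via monotonicity --- are handled correctly.
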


\begin{proof}
	Suppose that $(x_n)$ is a monotone decreasing and statistical $p$-convergent to $x$ sequence in an $LNRS$ $(X,p,E)$. Then consider an arbitrary $m\in\mathbb{N}$. Thus, we have $x_m-x_n\in X_+$ for all $n\geq m$, and so, we get $x_m-x_n\stpc x_m-x\in X_+$ by Proposition \ref{positive}. Hence, $x_m\geq x$. Since $m$ is arbitrary, $x$ is a lower bound of $(x_n)$. Assume that $y$ is another lower bound of $(x_n)$, i.e., $x_n\geq y$ for all $n$. So, again by Proposition \ref{positive}, we have $x_n-y\stpc x-y\in X_+$, i.e., $x\geq y$. Therefore, we get the desired result, $x_n\downarrow x$.
\end{proof}

\begin{proposition}
Let $(X,p,E)$ be an $op$-continuous $LNRS$. Then the statistical order convergent sequence  in $X$ is statistical $p$-convergent.	
\end{proposition}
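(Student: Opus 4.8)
The plan is to extract from the statistical order convergence an order-null decreasing majorant that lives in $X$, transport it through $p$ using $op$-continuity, and verify that its $p$-image is a statistical $p$-decreasing majorant for $p(x_n-x)$.

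First I would unpack the hypothesis. If $(x_n)$ in $X$ satisfies $x_n\sto x$, then by definition there are a sequence $(q_n)$ in $X$ and an index set $K=\{n_1<n_2<\cdots\}$ with $\delta(K)=1$ such that $(q_{n_k})_k$ is decreasing, $\inf_k q_{n_k}=0$, and $|x_{n_k}-x|\leq q_{n_k}$ for every $n_k\in K$; in particular $q_{n_k}\geq 0$ for all $k$, since it dominates $|x_{n_k}-x|\geq 0$. The key step is then the following: the restriction $(q_{n_k})_{k\in\mathbb{N}}$ is a genuine sequence in $X$ decreasing to $0$, hence $q_{n_k}\oc 0$ in $X$ (it serves as its own order-null witness). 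Invoking $op$-continuity of $(X,p,E)$ gives $p(q_{n_k})\oc 0$ in $E$. Moreover $(q_{n_k})_k$ is decreasing and positive and $p$ is monotone, so $(p(q_{n_k}))_k$ is a decreasing sequence in $E_+$, and a decreasing order-null sequence has infimum $0$; thus $p(q_{n_k})\downarrow 0$ in $E$. This is precisely the assertion that $(q_n)$ is statistical $p$-decreasing to $0$ along $K$, i.e. $q_n\stpd 0$.

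To conclude, I would apply the monotonicity of $p$ to the inequality $|x_{n_k}-x|\leq q_{n_k}$ — noting that the module of $q_{n_k}$ is $q_{n_k}$ itself — to obtain $p(x_{n_k}-x)\leq p(q_{n_k})$ for every $n_k\in K$. Combined with $q_n\stpd 0$ and $\delta(K)=1$, this is exactly $x_n\stpc x$. The one point that needs care is the bookkeeping with the density-one index set: $op$-continuity is formulated for sequences indexed by all of $\mathbb{N}$, so one must first observe that the restriction of $(q_n)$ to $K$ is an honest order-null sequence before applying it, and then keep the same $K$ throughout all three steps; apart from this, the argument is routine.
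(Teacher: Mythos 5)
Your argument is correct and follows essentially the same route as the paper's own proof: extract the decreasing order-null majorant $(q_{n_k})$ from the definition of $x_n\sto x$, use monotonicity of $p$ to get $p(x_{n_k}-x)\leq p(q_{n_k})$ on $K$, and use $op$-continuity to conclude $p(q_{n_k})\downarrow 0$ on $K$. Your version is in fact slightly more careful than the paper's, since you justify why order convergence of $p(q_{n_k})$ to $0$ together with monotonicity upgrades to $p(q_{n_k})\downarrow 0$.
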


\begin{proof} 
Assume that $x_n\sto x$ in $X$. Then there exist a sequence $t_n\std0$ in $X$ with a set $\delta(M)=1$ such that $|x_{n_m}-x|\leq t_{n_m}$ for all $n_m\in M$. Thus, we obtain $p(x_{n_m}-x)\leq p(t_{n_m})$ for every $n_m\in M$. On the other hand, since $t_{n_m}\downarrow0$ on the set $M$ in $X$, we have $p(t_{n_m})\downarrow0$ on $M$ in $E$ because of the $op$-continuity of $(X,p,E)$. Thus, we have $t_n\stpd 0$, and so, we get $x_n\stpc x$.
\end{proof}

\begin{theorem}\label{$op$-cont-0}
	Let $(X,p,E)$ be an $LNRS$. Then the following statements are equivalent;
	\begin{enumerate}
		\item[(i)] $x_n\sto 0$ in $X$ implies $x_n\stpc 0$,
		\item[(ii)] $x_n\oc 0$ in $X$ implies $x_n\stpc 0$.
	\end{enumerate}	
\end{theorem}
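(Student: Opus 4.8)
The plan is to prove the two implications separately; $(i)\Rightarrow(ii)$ is a formality, and all the content sits in $(ii)\Rightarrow(i)$, which rests on the observation that statistical $p$-convergence only ``sees'' a sequence up to modification on a density-zero set.

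For $(i)\Rightarrow(ii)$, suppose $x_n\oc 0$ in $X$, so there is a sequence $(q_n)$ in $X$ with $q_n\downarrow 0$ and $|x_n|\le q_n$ for all $n$. Since $\delta(\mathbb{N})=1$, reading the definitions with the index set $K=\mathbb{N}$ shows at once that $q_n\std 0$ and that $|x_{n_k}|\le q_{n_k}$ on $K$, i.e. $x_n\sto 0$; hence $(i)$ gives $x_n\stpc 0$. In other words, order convergence to $0$ is a special case of statistical order convergence to $0$, so $(i)$ automatically contains $(ii)$.

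For $(ii)\Rightarrow(i)$, assume $x_n\sto 0$. Unwinding the definition produces a sequence $q_n\std 0$ in $X$ and a set $K$ with $\delta(K)=1$ and $|x_{n_k}|\le q_{n_k}$ on $K$; and $q_n\std 0$ produces a set $J$ with $\delta(J)=1$ along which $(q_{n_j})$ is decreasing with $\inf_{n_j\in J}q_{n_j}=0$. On $M:=K\cap J$, which has $\delta(M)=1$, one has simultaneously $|x_{n_m}|\le q_{n_m}$ and $(q_{n_m})_{n_m\in M}$ decreasing to $0$ (a subsequence of a sequence decreasing to $0$ still decreases to $0$). Now I would manufacture a genuinely order-null sequence agreeing with $(x_n)$ off a density-zero set: writing $M=\{m_1<m_2<\cdots\}$, set $y_n:=x_n$ for $n\in M$ and $y_n:=0$ otherwise, and $s_n:=q_{m_i}$ whenever $m_i\le n<m_{i+1}$, with $s_n:=q_{m_1}$ for $n<m_1$. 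Then $(s_n)$ is decreasing on all of $\mathbb{N}$ with $\inf_n s_n=\inf_i q_{m_i}=0$, so $s_n\downarrow 0$, while $|y_n|\le s_n$ for every $n$; hence $y_n\oc 0$. By $(ii)$ we get $y_n\stpc 0$, and since $x_n=y_n$ for a.a.$n$, the first theorem of this section (characterizing statistical $p$-limits up to density-zero modification) yields $x_n\stpc 0$, completing the equivalence.

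The index-set bookkeeping with $K$, $J$, and $M$ is routine. The one point that needs care---and the main, if mild, obstacle---is the step where the statistical hypothesis is upgraded to an honest order-convergence statement: because $(q_{n_m})$ is decreasing only along $M$, it must be interpolated by the step sequence $(s_n)$ to obtain a dominating sequence that is monotone and vanishes over all of $\mathbb{N}$, and one should check both that $\inf_{n_m\in M}q_{n_m}=0$ is inherited from $\inf_{n_j\in J}q_{n_j}=0$ and that $(s_n)$ genuinely dominates $|y_n|$ at every index.
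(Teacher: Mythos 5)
Your proof is correct, and it is both simpler and more complete than the paper's in complementary ways. For $(i)\Rightarrow(ii)$ you use the direct observation that $x_n\oc 0$ is literally a special case of $x_n\sto 0$ (take $K=\mathbb{N}$), whereas the paper takes a detour: it extracts the dominating sequence $y_n\downarrow 0$, applies $(i)$ to $(y_n)$ rather than to $(x_n)$, and then transfers the bound via monotonicity of $p$; both work, but your route is the economical one. The real divergence is in $(ii)\Rightarrow(i)$, which the paper dismisses with ``It is straightforward'' --- and it is not quite: the naive attempt (apply monotonicity of $p$ to $|x_{n_k}|\le q_{n_k}$) stalls because $q_{n_j}\downarrow 0$ along $J$ does not give $p(q_{n_j})\downarrow 0$ without $op$-continuity, so hypothesis $(ii)$ must genuinely be invoked on some order-null sequence. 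Your interpolation device --- replacing $(x_n)$ by $(y_n)$ vanishing off $M=K\cap J$ and building the monotone step majorant $(s_n)$ from $(q_{n_m})_{n_m\in M}$, then finishing with the a.a.$n$ modification theorem --- is exactly the missing content, and your two flagged checkpoints ($\inf_{n_m\in M}q_{n_m}=0$ via cofinality of $M$ in $J$, and $|y_n|\le s_n$ at every index) are the right ones and both hold. In short, you supply a genuine proof of the implication the paper leaves as an assertion, at the cost of a few lines of bookkeeping the paper avoids.
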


\begin{proof} 
	$(i)\Rightarrow(ii)$: 
	Let $x_n\oc 0$ be a sequence in $X$. Then there exists a sequence $y_n\downarrow0$ in $X$ such that $|x_n|\le y_n$ for all $n\in \mathbb{N}$. So, we have $p(x_n)\leq p(y_n)$ for each $n$. On the other hand, we have $y_n\sto 0$ because of $y_n\downarrow0$. Now, by using $(i)$, we get $y_n\stpc 0$. Then there exists another sequence $q_n$ with an index set $\delta(K)=1$ such that $p(y_{n_k})\leq q_{n_k}$ for every $n_k\in K$. So, we obtain the following inequality
	$$
	p(x_{n_k})\leq p(y_{n_k})\leq q_{n_k}
	$$
	for all $n_k\in K$. Therefore, we have $x_n\stpc 0$.
	
	$(ii)\Rightarrow(i)$: It is straightforward.
\end{proof}

Recall that a subset $A$ of a Riesz space $E$ is called solid if, for each $x\in A$ and $y\in E$,  $|y|\leq|x|$ implies $y\in A$. Also, a solid vector subspace of a Riesz space is referred to as an ideal. Moreover, an order closed ideal is called a band (cf. \cite{AB,AlTo}).
\begin{proposition}
Let $(X,p,E)$ be an $LNRS$ and $B$ be a band in $X$. If $b_n\stpc x$ satisfies for a sequence $(b_n)$ in $B$ then we have $x\in B$.
\end{proposition}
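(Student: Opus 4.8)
The plan is to characterise the band $B$ through its disjoint complement. Write $B^{d}:=\{y\in X:\ |y|\wedge|b|=0\ \text{for all}\ b\in B\}$ and $B^{dd}:=(B^{d})^{d}$. Since the underlying vector lattice $X$ is Archimedean (our standing assumption), every band coincides with its second disjoint complement, that is, $B=B^{dd}$ (see, e.g., \cite{AB,LZ}). Hence it is enough to show that $x$ is disjoint from every element of $B^{d}$, so I fix an arbitrary $y\in B^{d}$ and aim to prove $|x|\wedge|y|=0$.

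The key steps would then be the following. First, since each $b_{n}$ lies in $B$ and $y\in B^{d}$, we have $|b_{n}|\wedge|y|=0$ for every $n$; in other words the sequence $\bigl(|b_{n}|\wedge|y|\bigr)_{n}$ is identically zero, and therefore statistically $p$-convergent to $0$. On the other hand, from $b_{n}\stpc x$ together with the continuity of the lattice operations and of the modulus in an $LNRS$ (Theorem~\ref{LO are continuous} and the corollary following it), we obtain $|b_{n}|\stpc|x|$; moreover the constant sequence $y_{n}:=|y|$ is $p$-convergent, hence $st_{p}$-convergent, to $|y|$. Applying Theorem~\ref{LO are continuous} (continuity of $\wedge$) to $(|b_{n}|)_{n}$ and $(y_{n})_{n}$ yields $|b_{n}|\wedge|y|\stpc|x|\wedge|y|$. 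Since this very sequence also $st_{p}$-converges to $0$, the uniqueness of the statistical $p$-limit (Proposition~\ref{uniqness}) forces $|x|\wedge|y|=0$. As $y\in B^{d}$ was arbitrary, $x\in B^{dd}=B$, which is the assertion.

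The main obstacle — and the only non-routine ingredient — is the identity $B=B^{dd}$; this is precisely the point at which Archimedeaness is genuinely used, and it is what makes the disjoint-complement argument available. A more direct route, namely extracting from $(b_{n})$ a subsequence that order converges to $x$ and then invoking order-closedness of the band, does not work: $p$-convergence (even in its non-statistical form) need not imply order convergence in an $LNRS$, so no such subsequence need exist. The remaining points — that the constant sequence $|y|$ is $st_{p}$-convergent, that the continuity results quoted above apply verbatim, and that Proposition~\ref{uniqness} is applicable in this situation — are bookkeeping.
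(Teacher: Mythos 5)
Your proof is correct and follows essentially the same route as the paper's: both pass to the disjoint complement, use Theorem~\ref{LO are continuous} to get $|b_n|\wedge|y|\stpc|x|\wedge|y|$ while the sequence is identically zero, and conclude $x\in B^{dd}=B$. Your version is only slightly more explicit in citing the uniqueness of the statistical $p$-limit (Proposition~\ref{uniqness}) and the $st_p$-convergence of the constant sequence $|y|$, which the paper leaves implicit.
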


\begin{proof}
Suppose that $b_n\stpc x$ holds for a sequence $(b_n)$ in $B$ and $x\in X$. Then it follows from Theorem \ref{LO are continuous} that we have $|b_n|\wedge|z| \stpc |x|\wedge|z|$ for any $z\in B^d:=\{z\in X: |z|\wedge|b|=0\ \text{for \ all} \ b\in B\}$. On the other hand, $|b_n|\wedge|z|=0$ for all $n$ because of $(b_n)$ in $B$. Hence, we have $|x|\wedge|z|=0$, and so, we obtain $x\in B^{dd}$. As a result, it follows from \cite[Thm.1.39]{ABPO} that we have $B=B^{dd}$, and so, we obtain $x\in B$.
\end{proof}

\begin{proposition}
Let $(X,p,E)$ be an $LNRS$. Take a projection band $B$ of $X$ and  the corresponding band projection $p_B$. Then $x_n\stpc x$ implies $P_B(x_n)\stpc P_B(x)$ in $X$. 
\end{proposition}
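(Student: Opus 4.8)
The plan is to lean on the structural properties of band projections: a band projection $P_B$ on a Riesz space is a linear operator with $0\le P_B\le I$ in the order sense, and in particular $|P_B(z)|=P_B(|z|)\le|z|$ for every $z\in X$ (see, e.g., \cite{AB} or \cite{ABPO}). Together with the monotonicity of the lattice norm $p$ — available because $(X,p,E)$ is an $LNRS$ — this allows the domination that controls $x_n\stpc x$ to be transferred verbatim to the images $P_B(x_n)$, without altering either the controlling sequence $(q_n)$ or the density-one index set.

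First I would unwind the hypothesis $x_n\stpc x$: by definition there is a sequence $q_n\stpd0$ in $X$ and an index set $K=\{n_1<n_2<\cdots\}\subseteq\mathbb{N}$ with $\delta(K)=1$ such that $p(x_{n_k}-x)\le q_{n_k}$ for every $n_k\in K$. Next, using the linearity of $P_B$ I write $P_B(x_{n_k})-P_B(x)=P_B(x_{n_k}-x)$, and then estimate
$$
p\big(P_B(x_{n_k})-P_B(x)\big)=p\big(P_B(x_{n_k}-x)\big)\le p(x_{n_k}-x)\le q_{n_k}
$$
for every $n_k\in K$, where the middle inequality is the monotonicity of $p$ applied to $|P_B(x_{n_k}-x)|\le|x_{n_k}-x|$.

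Finally, the very same sequence $(q_n)$ with $q_n\stpd0$ and the very same set $K$ with $\delta(K)=1$ now witness the domination required by the definition of statistical $p$-convergence for the sequence $\big(P_B(x_n)\big)$ with limit $P_B(x)$; hence $P_B(x_n)\stpc P_B(x)$. The only point calling for any care — the \emph{main obstacle}, such as it is — is the inequality $|P_B(z)|\le|z|$, i.e.\ the order-boundedness of the band projection; I would simply cite this standard fact from \cite{AB} or \cite{ABPO} rather than reprove it, and everything else reduces to routine bookkeeping, since neither the controlling sequence nor the index set changes.
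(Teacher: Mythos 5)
Your proposal is correct and follows essentially the same route as the paper's own proof: both unwind the definition of $x_n\stpc x$, use the fact that the band projection $P_B$ is a lattice homomorphism satisfying $0\le P_B\le I$ (equivalently $|P_B(z)|=P_B(|z|)\le|z|$), and then apply the monotonicity of $p$ to transfer the domination $p(x_{n_k}-x)\le q_{n_k}$ to $p(P_B(x_{n_k})-P_B(x))\le q_{n_k}$ with the same controlling sequence and the same density-one index set. No gaps.
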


\begin{proof}
Assume that $x_n\stpc x$ holds in $(X,p,E)$. Then there exists a sequence $q_n\stpd 0$ with an index set $\delta(K)=1$ such that $p(x_{n_k}-x)\leq q_{n_k}$ for all $n_k\in K$. On the other hand, it is well known that a band projection $p_B$ is a lattice homomorphism and it satisfies the inequality $0\leq P_B\leq I$. Hence, we have 
$$
p(P_B(x_{n_k})-P_B(x))=p(|P_B(x_{n_k})-P_B(x)|)=p(P_B|x_{n_k}-x|)\leq p(|x_{n_k}-x|)\leq q_{n_k}
$$
for every $n_k\in K$. Therefore, we get $P_B(x_n)\stpc P_B(x)$.
\end{proof}

We finish this section with statistical convergence on the Dedekind completion Riesz spaces. Recall that a Dedekind complete Riesz space $E^\delta$ is said to be a Dedekind completion of the Riesz space $E$ whenever $E$ is Riesz isomorphic to a majorizing order dense Riesz subspace of $E^\delta$. It is well known that every Archimedean Riesz space has a Dedekind completion (cf. \cite[Thm.2.24]{ABPO}).
\begin{theorem}
	Let $(X,p,E)$ be an $LNRS$. Then, for a Riesz norm  $p^{\delta}_L:X^{\delta}\rightarrow E^{\delta}$ denoted by 
	$p^{\delta}_L(z)=\sup\limits_{0\leq x\leq |z|}p(x)$ for every $z\in X^\delta$ and for a sequence $(x_n)$ in $X$, $x_n\stpc x$ in $(X,p,E)$ iff $x_n\stpc x$ in $(X^\delta,p^\delta,E^\delta)$.
\end{theorem}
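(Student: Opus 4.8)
The plan is to prove the two implications separately, the forward one being routine and the converse carrying the weight. The common preliminary is to record that $p^{\delta}_L$ restricts to $p$ on $X$: for $x\in X$ every $p(y)$ with $y\in X$, $0\le y\le|x|$ satisfies $p(y)\le p(|x|)=p(x)$ by monotonicity of $p$, and the value $p(x)$ is attained at $y=|x|$, so $p^{\delta}_L(x)=p(x)$ (the supremum exists in $E^{\delta}$ because $X$ is majorizing in $X^{\delta}$, giving $|x|\le w\in X$ and hence $p(y)\le p(w)\in E$). In particular, since $(x_n)$ and $x$ lie in $X$, each difference $x_{n_k}-x$ lies in $X$ and $p^{\delta}_L(x_{n_k}-x)=p(x_{n_k}-x)$. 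I would also note the elementary fact that a sequence $(e_n)$ in $E$ has $e_n\downarrow 0$ in $E$ iff $e_n\downarrow 0$ in $E^{\delta}$: the nontrivial direction uses order density, since $0<f\le e_n$ with $f\in E^{\delta}$ would dominate some $0<g\in E$ with $g\le e_n$ for all $n$. Consequently, for sequences in $X$, being $\stpd 0$ in $(X,p,E)$ and in $(X^{\delta},p^{\delta}_L,E^{\delta})$ coincide.

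Granting this, the implication $x_n\stpc x$ in $(X,p,E)$ $\Rightarrow$ $x_n\stpc x$ in $(X^{\delta},p^{\delta}_L,E^{\delta})$ is immediate: take the witnessing $q_n\stpd 0$ and index set $K$; viewing $(q_n)$ inside $X^{\delta}$ gives $q_n\stpd 0$ in $(X^{\delta},p^{\delta}_L,E^{\delta})$ along $K$ by the previous paragraph, and $p^{\delta}_L(x_{n_k}-x)=p(x_{n_k}-x)\le p(q_{n_k})=p^{\delta}_L(q_{n_k})$ on $K$.

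For the converse, suppose $x_n\stpc x$ in $(X^{\delta},p^{\delta}_L,E^{\delta})$ with witness $q_n\stpd 0$ in $X^{\delta}$ and index set $K$, so $p(x_{n_k}-x)=p^{\delta}_L(x_{n_k}-x)\le p^{\delta}_L(q_{n_k})$ on $K$ with $p^{\delta}_L(q_{n_k})\downarrow 0$ in $E^{\delta}$. The left-hand sides already live in $E$, so the only missing ingredient is a dominating null sequence coming from $X$ rather than from $X^{\delta}$. The approach I would take is to expand $p^{\delta}_L(q_{n_k})=\sup\{p(y):y\in X,\ 0\le y\le|q_{n_k}|\}$ and, using the majorizing and order-density properties of $X$ in $X^{\delta}$, select recursively in $k$ elements $q'_{n_k}\in X$ with $p(x_{n_k}-x)\le p(q'_{n_k})\le p(q'_{n_{k-1}})$ and $p(q'_{n_k})\downarrow 0$ in $E$; then $(q'_n,K)$ witnesses $x_n\stpc x$ in $(X,p,E)$.

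The hard part is exactly this last selection. A sequence in $E$ dominated by a sequence decreasing to $0$ in $E^{\delta}$ need not be dominated by any sequence decreasing to $0$ in $E$ once $E$ fails to be Dedekind complete — the standard cautionary example being bumps of height one in $C[0,1]$ whose supports shrink to a point, which converge to $0$ in order in $C[0,1]^{\delta}$ but admit no decreasing-to-zero continuous majorant — so the recursion can stall unless one genuinely exploits the origin of the sequence as $\bigl(p^{\delta}_L(x_{n_k}-x)\bigr)$ or imposes extra regularity (such as $op$-continuity of $(X,p,E)$, or the countable sup property for $E$). Verifying that the transfer of the dominating null sequence from $E^{\delta}$ down to $E$ is legitimate in the generality claimed is the crux, and this is the step I would scrutinize most carefully before trusting the statement.
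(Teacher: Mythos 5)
Your preliminaries (that $p^{\delta}_L$ restricts to $p$ on $X$, and that a sequence lying in $E$ decreases to $0$ in $E$ iff it does so in $E^{\delta}$) are correct and make the forward implication complete; this is essentially the paper's argument for that direction. The converse, however, you explicitly leave unproved, and that is a genuine gap: the definition of $x_n\stpc x$ in $(X,p,E)$ requires a dominating sequence decreasing to $0$ \emph{in $E$}, whereas the hypothesis only yields $p(x_{n_k}-x)\le p^{\delta}_L(q_{n_k})$ with $p^{\delta}_L(q_{n_k})\downarrow 0$ in $E^{\delta}$. Any admissible majorant $(r_{n_k})$ in $E$ must dominate $s_{n_k}:=\sup_{j\ge k}p(x_{n_j}-x)$, a supremum that in general exists only in $E^{\delta}$, so the issue is exactly whether a sequence decreasing to zero in $E^{\delta}$ can be covered termwise by one decreasing to zero in $E$; your proposed recursion never gets started, because choosing $q'_{n_k}$ with $p(q'_{n_k})$ suitably close to $p^{\delta}_L(q_{n_k})$ from above is the very statement to be proved. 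Be aware also that your cautionary example is incorrect: unit bumps in $C[0,1]$ with supports shrinking to a point $t_0$ \emph{do} admit a decreasing-to-zero continuous majorant (take $r_n(t)=\max\bigl(0,\min\bigl(1,2-|t-t_0|/\varepsilon_n\bigr)\bigr)$; the only continuous common lower bound vanishes off $\{t_0\}$, hence everywhere), so this does not witness a failure of the transfer.

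For context, the paper's own proof of the converse does not close this gap either: it merely restates that $q_n\stpd 0$ holds in $E^{\delta}$, cites \cite{AS}, and asserts $x_n\stpc x$ in $(X,p,E)$, without ever producing a dominating sequence decreasing to $0$ in $E$. So you have correctly isolated the single step on which the theorem turns, but neither you nor the paper supplies it; a complete treatment would need either an additional hypothesis (for instance the countable sup property for $E$, under which one can extract from the downward directed set $\{w\in E:\ w\ge s_{n_k}\ \text{for some}\ k\}$, whose infimum is $0$, a decreasing sequence with infimum $0$) or a genuinely new argument exploiting that the dominated elements $p(x_{n_k}-x)$ already lie in $E$.
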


\begin{proof}
	Suppose that $x_n\stpc0$ in $(X,p,E)$. Then there exists a sequence $q_n\stpd0$ in $E$ with a set $\delta(K)=1$ such that $p(x_{n_k}-x)\leq q_{n_k}$ for every $n_k\in K$. Since $q_{n_k}\downarrow 0$ in $E$, we have $q_{n_k}\downarrow 0$ in $E^\delta$ (cf. \cite{AS}). Hence, it follows that $p^\delta(x_{n_k}-x)\leq q_{n_k}$ in $E^{\delta}$, and so, we obtain $x_n\stpc x$ in $(X^\delta,p^\delta,E^\delta)$.
	
	Conversely, assume that $x_n\stpc 0$ in $(X^\delta,p^\delta,E^\delta)$. Then there exists a sequence $q_n\stpd0$ in $E^\delta$ with a set $\delta(K)=1$ such that $p^\delta(x_{n_k}-x)\leq q_{n_k}$ for every $n_k\in K$. On the other hand, we have $q_n\stpd0$ in $E^\delta$ (cf. \cite{AS}).  Hence $x_n\stpc 0$ in $(X,p,E)$. 
	
\end{proof}

\textit{Declarations}: This article does not contain any studies with human participants or animals performed by any of the authors.\\
\textit{Data availability statement}: Data sharing not applicable to this article as no datasets were generated or analyzed during the current study.
{\tiny 

}

\begin{thebibliography}{30}
\bibitem{AS}
Y. A. Abramovich, G. Sirotkin, 
On order convergence of nets, 
Positivity, \textbf{9}(3), 287-292, 2005.

\bibitem{AB}
C. D. Aliprantis, O. Burkinshaw,
Locally Solid Riesz Spaces with Applications to Economics, 
Mathematical Surveys and Monographs Centrum, vol. 105, 2nd ed., 2003.

\bibitem{ABPO}
C. D. Aliprantis, O. Burkinshaw, 
Positive Operators, 
Springer, Dordrecht, 2006.

\bibitem{AlTo}
C. D. Aliprantis, R. Tourky, Cones and duality, Graduate Studies in Mathematics, vol. 84, American Mathematical Society, Providence, RI, 2007.

\bibitem{Aydn1}
A. Ayd\i n, The statistically unbounded $\tau$-convergence on locally solid Riesz spaces, Turk. J. Math. \textbf{44}(3), 949-956, 2020.

\bibitem{Aydn2}
A. Ayd\i n, The statistical multiplicative order convergence in vector lattice algebras, Fact. Univ. Ser.: Math. Infor. \textbf{36}(2), 409-417, 2021.

\bibitem{AE}
A. Ayd\i n, M. Et, Statistically multiplicative convergence on locally solid vector lattice algebras, Turk. J. Math., \textbf{45}(4), 1506-1516, 2021.

\bibitem{AEEM1}
A. Ayd{\i}n,  E. Y. Emelyanov, N. Erkur\c{s}un-\"Ozcan, M. A. A. Marabeh,
Unbounded $p$-convergence in lattice-normed vector lattices,
Sib. Adv. Math. \textbf{29}, 164-182, 2019.

\bibitem{Con}
J. Connor, Two valued measures and summability, Analysis, \textbf{10}(4), 373-385, 1990.

\bibitem{E}   	
E. Y. Emelyanov, Infinitesimal analysis and vector lattices, Sib. Adv. Math. \textbf{6}(1), 19-70, 1996.

\bibitem{Ercan}
Z. Ercan, A characterization of u-uniformly completeness of Riesz spaces in terms of statistical $u$-uniformly pre-completeness, Demonstratio Math. 42 (2009), 381–385.

\bibitem {Fast}
H. Fast, Sur la convergence statistique, Colloq. Math. \textbf{2}, 241-244, 1951.

\bibitem{Fridy}
J. A. Fridy, On statistical convergence, Analysis, \textbf{5}(4), 301-313, 1985.

\bibitem{KK}
A. G. Kusraev, S. S. Kutateladze, Boolean valued analysis,
Mathematics and its Applications, 1999.

\bibitem{K}
A. G. Kusraev, Dominated operators, Mathematics and its Applications,
2000.

\bibitem{LZ}
W. A. J. Luxemburg, A. C. Zaanen, Riesz Spaces I, North-Holland Pub. Co., Amsterdam, 1971.

\bibitem{Riez}
Riesz F.
Sur la Décomposition des Opérations Fonctionelles Linéaires. Bologna, Atti Del Congresso Internazionale Dei Mathematics Press, 1928. 

\bibitem{St}
H. Steinhaus, Sur la convergence ordinaire et la convergence asymptotique, Colloq. Math. \textbf{2}, 73-74, 1951.

\bibitem{SP}
C. Şençimen, S. Pehlivan, Statistical order convergence in Riesz spaces, Math. Slov. \textbf{62}(2), 557-570, 2012.

\bibitem{TA}
F. Temizsu, A. Ayd{\i}n, Statistical convergence of nets on locally solid Riesz spaces, J. Anal. 2022.

\bibitem{Trip}
B. C. Tripathy, On statistically convergent sequences, Bull. Calcutta Math. Soc. \textbf{90}, 259–262, 1998.

\bibitem{Za}
A. C. Zaanen, Riesz Spaces II, North-Holland Pub. Co. Amsterdam, 1983.

\bibitem {Zygmund}
A. Zygmund, Trigonometric Series, Cambridge University Press, Cambridge, UK, 1979.

\end{thebibliography}
\end{document}